\documentclass[12pt]{article}
\usepackage[T1]{fontenc}
\usepackage[utf8]{inputenc}
\usepackage{geometry}
\usepackage{xcolor}
\usepackage{graphicx}
\usepackage{amsmath,amssymb}
\usepackage{amsmath,mathrsfs}
\usepackage{mathtools}
\usepackage{proof}
\usepackage{amsfonts}
\usepackage{amssymb}
\usepackage{stmaryrd}
\usepackage{pstricks}
\usepackage{setspace}
\usepackage[english]{varioref}
\usepackage{placeins}
\usepackage[english]{babel}
\usepackage{tikz}
\usepackage{makeidx}
\graphicspath{{images/}}
\usepackage[babel=true]{csquotes}
\newcommand{\comment}[1]{}
\newif\ifpdf
\ifpdf \else \fi \textwidth = 6.5 in \textheight = 9 in
\usepackage{bbm}
\usepackage{thmtools}
\usepackage{amsthm}
\usepackage{babel}

\usepackage{mathtools}

\newenvironment{proof}[1][Proof]{\noindent\textbf{#1.} }{$\Box$}

\providecommand{\U}[1]{\protect\rule{.lin}{.lin}}

\newtheorem{theorem}{Theorem}

\newtheorem{corollary}[theorem]{Corollary}

\theoremstyle{definition}

\newtheorem{observation}[theorem]{Observation}

\newtheorem{proposition}[theorem]{Proposition}

\usepackage{caption}
\usepackage{subcaption}

\begin{document}

\title{Sub-quorum colorings of some infinite families of caterpillars}

\author{Rafik Sahbi$^{1}$, Youcef Belkina$^{2}$ and Amar Bennadji$^{2}$\\$^{1}${\small Department of the Preparatory Training}\\{\small National Higher School of Advanced Technologies}\\{\small B.P. 474, Martyrs Square, Algiers 16001, Algeria.}\\{\small E-mail: r.sahbi@g.essa-alger.edu.dz}\\$^{2}${\small Department of Mathematics, University of Blida 1}\\{\small E-mails: youcefbelkina@gmail.com}\\{\small bennadjitalout@gmail.com}}
\date{}
\maketitle

\begin{abstract}
A partition $\pi=\{V_{1},V_{2},...,V_{k}\}$ of the vertex set $V$ of a graph $G$ into $k$ color classes $V_{i},$ with $i\in\{1,...,k\}$ is called a {\it quorum coloring} if for every vertex $v\in V,$ at least half of the vertices in the closed neighborhood $N[v]$ of $v$ have the same color as $v.$ The maximum cardinality of a quorum coloring of $G$ is called the {\it quorum coloring number} of $G$ and is denoted by $\psi_{q}(G).$ A {\it sub-quorum coloring} of $G$ is an onto partial function $f:V\rightarrow\left\{1,2,\ldots,\ell\right\}$ having the property that for every vertex $v\in V,$ if $f(v)$ is defined, then at least half of the vertices in $N[v]$ having an image by $f$, have the same color as $v.$ The {\it sub-quorum coloring number} $\psi_{sq}(G)$ equals the maximum value $\ell$ in a sub-quorum coloring of $G.$ In this paper, we determine the exact value of the sub-quorum coloring number for some infinite families of caterpillars including complete $n$-tuple caterpillars and complete caterpillars with minimum spine-vertex degree three.
\newline\newline\textbf{Keywords:} Quorum colorings, defensive alliances, sub-quorum colorings, $k$-independent sets, complete caterpillars.
\newline\textbf{2000 Mathematical Subject Classification:} 05C15, 05C69.
\end{abstract}

\section{Introduction}

All the graphs of this paper are finite and simple, that is, they are undirected, have a finite number of vertices and not multiple edges nor loops.

A partition $\pi=\{V_{1},V_{2},...,V_{k}\}$ of the vertex set $V$ of a graph $G$ into $k$ color classes $V_{i},$ with $i\in\{1,...,k\}$ is called a \textit{quorum coloring} if for every vertex $v\in V,$ at least half of the vertices in $N_{G}[v]$ have the same color as $v.$ The color classes $V_{i}$ are called {\it quorum classes} and each vertex of $V_{i}$ is said to be a {\it quorum vertex}. The quorum class of a vertex $v$ with respect to $\pi$ is denoted by $\mathcal{C}_{\pi}(v).$ The maximum cardinality of a quorum coloring of $G$ is called the \textit{quorum coloring number} of $G$ and is denoted by $\psi_{q}(G).$ A quorum coloring of cardinality $\psi_{q}(G)$ is called a $\psi_{q}$-{\it coloring} of $G.$

Quorum colorings were introduced in 2013 by Hedetniemi, Hedetniemi, Laskar and Mulder \cite{HedQuor} where the authors pointed out that the concept of quorum class coincides with that of defensive alliance that have been extensively studied over the last two decades (cf. \cite{EroAll,FernSurv,Fricke,HayAll,KrisAll,OuazSurv}) and consequently, that the notions of quorum coloring and partition into defensive alliances mean the same thing. However, from the point of view of quorum colorings, each vertex $v$ of a quorum class $V_{i}$ admits $\frac{1}{2}$ as minimum ratio of $|V_{i}|$ and $|N_{G}[v]|,$ while the point of view of partitions into defensive alliances is that the differential of $|V_{i}|$ and $|N_{G}[v]\setminus V_{i}|$ is positive, that is, there are at least as many vertices in $V_{i}$ as in $N_{G}[v]\setminus V_{i}.$ Nevertheless, we will use in this paper the terminology and the notations of \cite{HedQuor}. Note that the authors \cite{HedQuor} suggested the study of a dozen of open problems concerning quorum colorings of graphs about nine of which received answers in the references \cite{SahNewSharp,SahComp,SahRef,SahSol,SahQuor}.

Among the open problems posed in \cite{HedQuor}, there is the study of the following variant of the concept of quorum coloring and its associated invariant.

A {\it sub-quorum coloring} of a graph $G=(V,E)$ is an onto partial function $f:V\rightarrow\left\{1,2,\ldots,\ell\right\}$ having the property that for every vertex $v\in V,$ if $f(v)$ is defined, then at least half of the vertices in $N_{G}[v]$ having an image by $f$, have the same color as $v.$ The {\it sub-quorum coloring number} $\psi_{sq}(G)$ equals the maximum value $\ell$ in a sub-quorum coloring of $G.$ It can be seen from the previous definition that the set $\left\{f^{-1}(i);~i\in\{1,2,\ldots,\ell\right\}\}$ is a quorum coloring of $G\left[f^{-1}\left(\{1,2,\ldots,\ell\}\right)\right].$ Therefore, a sub-quorum coloring of $G$ is nothing but a quorum coloring of an induced subgraph of $G$ by a vertex subset $S\subseteq V.$ Thus, the set $\left\{f^{-1}(i);~i\in\{1,2,\ldots,\ell\right\}\}$ is called the sub-quorum coloring of $G$ associated with $f$ and is denoted by $sq(f),$ and the graph $G\left[f^{-1}\left(\{1,2,\ldots,\ell\}\right)\right]$ is called the subgraph of $G$ associated with the sub-quorum coloring $f$ and is denoted by $G_{f}.$

Quorum colorings and sub-quorum colorings of graphs can be used as a model for data classification problems. Indeed, a data network can be modeled by a graph $G$ whose the vertices represent the data set to classify so that if two vertices are adjacent, then it means that their corresponding data share a minimum number, fixed in advance, of common characteristics. Therefore, we can obtain a refined classification of our data by finding a $\psi_{q}$-coloring of the graph $G$ (cf. \cite{OuazSurv,SahSol,ShaThes}). However, when the quorum coloring number is small with respect to the graph order, this means that our classification is not refined enough, in particular when $\psi_{q}(G)=1.$ To remedy it, we can decide to not take into account some data that compromise a finer clustering by deleting these latter. This can be realized by adopting the sub-quorum colorings model which allows to increase $\psi_{q}(G)$ by finding an induced subgraph of $G$ by a subset $S\in V(G)$ of its vertices  having $\psi_{q}(G[S])>\psi_{q}(G).$ Thereby, a $\psi_{sq}$-coloring of $G$ corresponds to the best possible classification of our initial data set possibly deprived of some of them.

The study of sub-quorum colorings having been posed as an open problem by Hedetniemi {\it et al.} \cite{HedQuor}, we initiate it in this paper by providing the value of the sub-quorum coloring number of some usual families of caterpillars as follows. In Section 2, we recall some standard definitions and terminology of graph theory, in Section 3 we state some basic properties of both quorum and sub-quorum colorings, then in Section 4 we determine the exact value of the sub-quorum coloring number of some infinite classes of caterpillars such as the paths, the stars, the double stars, complete $n$-tuple caterpillars and complete caterpillars with minimum stem degree $3.$

\section{Definitions and terminology}

In this section, we recall some basic definitions and terminology of graph theory. We also define some families of caterpillars with their notations and the notions of $1$ and $2$-independent set.

\noindent Let $G=(V,E)$ be a graph. The vertex set and the edge set of $G$ are also denoted by $V(G)$ and $E(G),$ respectively. The {\it complement} of $G$ is denoted by $\overline{G}.$ For any integer $r\geq2,$ we denote by $rG$ the graph consisting of $r$ disjoint copies of $G.$ The subgraph of $G$ {\it induced} by a subset $S$ of $V$ is denoted by $G[S].$ For every vertex $v\in V$ and every subset $S\subseteq V,$ the \textit{open neighborhood} of $v$ in $S$ is the set $N_{S}(v)=\{u\in S:uv\in E(G)\}$ and its \textit{closed neighborhood} is $N_{S}[v]=N_{S}(v)\cup S;$ in particular, for $S=V$ the sets $N_{S}(v)$ and $N_{S}[v]$ are denoted $N_{G}(v)$ and $N_{G}[v]$ and called the open and closed neighborhood of $v$ in $G,$ respectively. The \textit{degree} of a vertex $v$ in a subset $S\subseteq V$ is is $d_{S}(v)=|N_{S}(v)|;$ in particular, for $S=V$ the degree $d_{V}(v)$ is denoted $d_{G}(v)$ and called the degree of $v$ in the graph $G.$ The maximum and minimum vertex degrees in $G$ are denoted by $\Delta(G)$ and $\delta(G),$ respectively. A vertex of $G$ with degree one is a {\it leaf} or a {\it pendent} vertex of $G.$ The set of all the leaves of $G$ is denoted by $L(G)$ or $L$ if there is no doubt about $G.$ A {\it tree} is a connected graph having no cycle. A {\it caterpillar} is a tree whose the deletion of the leaves results in a path called the {\it spine} of the caterpillar. A spine-vertex is called a {\it stem} if it is adjacent to a leaf. A leaf adjacent to a stem $v$ is a {\it pendant neighbor} of $v.$ A {\it complete caterpillar} is a caterpillar whose each spine-vertex is a stem. The {\it path} and the {\it complete graph} on $m$ vertices are denoted by $P_{m}$ and $K_{m},$ respectively. The {\it corona} $G\circ\overline{K_{n}}$ of a graph $G$ and $\overline{K_{n}}$ is the graph obtained from $G$ by appending $n$ leaves to each vertex of $G.$ A corona graph $P_{m}\circ\overline{K_{n}}$ is a complete caterpillar called {\it complete $n$-tuple caterpillar}; in particular, the caterpillar is said to be {\it simple} if $n=1$ and {\it double} if $n=2.$ The graph $P_{1}\circ\overline{K_{n}}$ is the {\it star} with $n$ leaves denoted by $S_{n},$ whose the unique vertex of $P_{1}$ is called the {\it center} of the star. A {\it double star} $S_{m,n}$ is the graph obtained from the two stars $S_{m}$ and $S_{n}$ by joining their centers.

For a graph $G,$ a subset of vertices $S\subset V(G)$ is called an {\it independent set} or $1${\it-independent set}, if the vertices of $S$ are pairwise non adjacent, that is, no vertex of $S$ admits a neighbor in $S.$ The {\it independence number} of $G,$ denoted $\beta_{1}(G),$ equals the maximum cardinality of an independent set of $G.$ A subset $S\subseteq V(G)$ is called a {\it $2$-independent set} if every vertex of $S$ has at most one neighbor in $S.$ The maximum cardinality of a $2$-independent set of $G$ is called the {\it $2$-independence number} of $G$ and is denoted by $\beta_{2}(G).$

The concepts of $1$-independent set and $2$-independent set are generalizable to the concept of $k$-independent set that has been widely studied in the discrete mathematical literature and whose \cite{ChellFavSurv} constitutes an exhaustive survey for further reading.

\section{Preliminary results}

First, we point out from the definitions of Section 2 that every independent set is a $2$-independent set, which leads us to the following observation.

\begin{observation}\label{Obs1} For any graph $G,$ \[\beta_{1}(G)\leq\beta_{2}(G).\]\end{observation}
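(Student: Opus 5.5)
The plan is to show directly that every independent set of $G$ is a $2$-independent set, and then to compare the two maxima.

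First, fix an independent set $S\subseteq V(G)$. By definition no vertex of $S$ has a neighbor in $S$, so $d_{S}(v)=0$ for every $v\in S$. In particular $d_{S}(v)\leq 1$ for every $v\in S$, which is exactly the condition for $S$ to be a $2$-independent set. Hence the family of independent sets of $G$ is contained in the family of $2$-independent sets of $G$.

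Next, take $S$ to be a maximum independent set, so $|S|=\beta_{1}(G)$. Since $S$ is $2$-independent, its cardinality is at most the maximum cardinality of a $2$-independent set, that is, $\beta_{1}(G)=|S|\leq\beta_{2}(G)$.

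The only point needing care is the edge case of the empty graph on no vertices, or conventions allowing $S=\emptyset$. The empty set is vacuously both independent and $2$-independent, so both maxima are well defined over nonempty families, and the inclusion argument applies verbatim. I do not expect any real obstacle; the statement is an immediate consequence of the definitions given in Section 2.
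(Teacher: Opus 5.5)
Your proof is correct and is the paper's own argument: every independent set is $2$-independent, so maximizing over the larger family of $2$-independent sets gives at least $\beta_{1}(G)$. Your remark about the empty set is harmless but not needed.
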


Furthermore, concerning the sub-quorum colorings one can immediately deduce the following three direct observations.

\begin{observation}\label{Obs2} For every graph $G,$ \[\displaystyle\psi_{sq}(G)=\max_{S\subseteq V(G)}\psi_{q}(G[S]).\]
\end{observation}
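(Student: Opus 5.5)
The plan is to prove the equality by establishing the two inequalities separately. The only tool needed is the remark made after the definition of sub-quorum colorings: a sub-quorum coloring of $G$ is the same thing as a quorum coloring of an induced subgraph $G[S]$. One point of the definitions should be fixed first. When $f(v)$ is defined, the vertices of $N_{G}[v]$ ``having an image by $f$'' are exactly the vertices of $N_{G}[v]\cap S$, where $S=f^{-1}(\{1,\ldots,\ell\})$. Since $G[S]$ is an induced subgraph, this set equals $N_{G[S]}[v]$. We take $S$ nonempty, since the color set $\{1,\ldots,\ell\}$ with $\ell\geq1$ must be reached by an onto map.

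\emph{First inequality: $\psi_{sq}(G)\leq\max_{S}\psi_{q}(G[S])$.} Take a sub-quorum coloring $f$ with $\ell=\psi_{sq}(G)$ and put $S=f^{-1}(\{1,\ldots,\ell\})$.
\begin{itemize}
\item Because $f$ is onto, the classes $f^{-1}(i)$ are nonempty. They partition $S$ into exactly $\ell$ parts.
\item By the identity $N_{G}[v]\cap S=N_{G[S]}[v]$, the sub-quorum condition at each $v\in S$ says precisely that at least half of $N_{G[S]}[v]$ shares the color of $v$.
\item Hence $sq(f)$ is a quorum coloring of $G[S]$ with $\ell$ classes, and $\psi_{q}(G[S])\geq\ell$.
\end{itemize}

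\emph{Second inequality: $\psi_{sq}(G)\geq\max_{S}\psi_{q}(G[S])$.} Fix a nonempty $S\subseteq V(G)$ and a $\psi_{q}$-coloring $\{V_{1},\ldots,V_{k}\}$ of $G[S]$, where $k=\psi_{q}(G[S])$.
\begin{itemize}
\item Define $f(v)=i$ for $v\in V_{i}$, and leave $f$ undefined on $V\setminus S$.
\item This $f$ is an onto partial function to $\{1,\ldots,k\}$.
\item Reading the same neighborhood identity backwards shows that $f$ satisfies the sub-quorum condition. Therefore $\psi_{sq}(G)\geq k$.
\end{itemize}

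Taking the maximum over $S$ gives the reverse inequality, and the two inequalities together give the equality.

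There is no real obstacle here. The only step that needs care is checking that ``vertices of $N[v]$ having an image by $f$'' is exactly the closed neighborhood of $v$ in the induced subgraph $G[S]$, so that the two quorum conditions coincide.
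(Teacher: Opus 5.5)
Your proof is correct and follows the same route as the paper. The paper treats this observation as immediate from its remark that $sq(f)$ is a quorum coloring of $G_{f}=G\left[f^{-1}(\{1,\ldots,\ell\})\right]$, and conversely that any quorum coloring of an induced subgraph $G[S]$ is a sub-quorum coloring of $G$; your two inequalities, justified by the identity $N_{G}[v]\cap S=N_{G[S]}[v]$, spell out exactly that correspondence. Excluding $S=\emptyset$ does not affect the maximum, since $\psi_{q}(G[S])\geq1$ for every nonempty $S$.
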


\begin{observation}\label{Obs3} For every graph $G,$ \[\displaystyle\psi_{sq}(G)=\max_{S\subseteq V(G)}\psi_{sq}(G[S]).\]
\end{observation}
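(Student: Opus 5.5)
The plan is to prove the two inequalities separately, using only the definitions and Observation \ref{Obs2}. The key fact is that induced subgraphs compose: for $S\subseteq V(G)$ and $T\subseteq S$, we have $(G[S])[T]=G[T]$.

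\textbf{Lower bound.} Taking $S=V(G)$ among the subsets over which the maximum ranges gives $G[S]=G$. Hence
\[\max_{S\subseteq V(G)}\psi_{sq}(G[S])\geq\psi_{sq}(G).\]
If one prefers to restrict to nonempty $S$ (so that $\psi_{sq}$ of the empty graph need not be defined), this choice is still admissible as long as $G$ is nonempty.

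\textbf{Upper bound.} Fix any $S\subseteq V(G)$. By Observation \ref{Obs2} applied to the graph $G[S]$,
\[\psi_{sq}(G[S])=\max_{T\subseteq S}\psi_{q}\big((G[S])[T]\big)=\max_{T\subseteq S}\psi_{q}(G[T]).\]
Since every $T\subseteq S$ is also a subset of $V(G)$, this is at most $\max_{T\subseteq V(G)}\psi_{q}(G[T])$. Applying Observation \ref{Obs2} to $G$ itself, the latter equals $\psi_{sq}(G)$.

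One can also argue directly, without Observation \ref{Obs2}. A sub-quorum coloring $f$ of $G[S]$ is a partial onto function on $S\subseteq V(G)$. For each colored vertex $v$, its colored closed neighborhood in $G[S]$ coincides with its colored closed neighborhood in $G$, because only vertices of $S$ are colored and adjacency in $G[S]$ is inherited from $G$. Therefore $f$, regarded as a partial function on $V(G)$, is a sub-quorum coloring of $G$ with the same number of colors.

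Taking the maximum over all $S$ yields the reverse inequality, and combining the two bounds gives the equality. No step is a real obstacle. The only point requiring care is the verification that the quorum condition depends only on colored vertices in $N[v]$, and therefore is unchanged when passing between $G[S]$ and $G$.
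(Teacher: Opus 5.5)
Your proof is correct and is essentially the argument the paper intends. The paper gives no written proof and presents this as a direct consequence of the remark that a sub-quorum coloring of $G$ is just a quorum coloring of an induced subgraph (Observation \ref{Obs2}); your two inequalities (taking $S=V(G)$ for one direction, and using $(G[S])[T]=G[T]$ or directly extending a sub-quorum coloring of $G[S]$ to $G$ for the other) make that deduction explicit.
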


\begin{observation}\label{Obs4}\cite{HedQuor} For every graph $G,$ \[\psi_{sq}(G)\geq\psi_{q}(G).\]\end{observation}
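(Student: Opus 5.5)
The claim $\psi_{sq}(G)\geq\psi_{q}(G)$ follows by exhibiting a sub-quorum coloring with $\psi_{q}(G)$ colors. The plan is to use the total function given by a $\psi_{q}$-coloring, or equivalently to note that $V(G)$ is one admissible choice of $S$ in Observation~\ref{Obs2}.

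Let $\pi=\{V_{1},\ldots,V_{k}\}$ be a $\psi_{q}$-coloring of $G$, so $k=\psi_{q}(G)$. Define $f:V\rightarrow\{1,\ldots,k\}$ by $f(v)=i$ whenever $v\in V_{i}$. Then $f$ is defined everywhere, and it is onto because every class of a partition is nonempty.

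To check the sub-quorum condition, take any $v\in V$. Every vertex of $N_{G}[v]$ has an image under $f$, so the set of vertices of $N_{G}[v]$ having an image is all of $N_{G}[v]$. The requirement therefore reduces to asking that at least half of $N_{G}[v]$ have the color of $v$, which is exactly the quorum property of $\pi$.

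Hence $f$ is a sub-quorum coloring with $\ell=k$, and so $\psi_{sq}(G)\geq k=\psi_{q}(G)$. In the language of Observation~\ref{Obs2}, taking $S=V(G)$ gives $\psi_{q}(G[S])=\psi_{q}(G)$, which is bounded above by the maximum over all $S$. No step here presents a real obstacle; the only point to state carefully is that a total function is allowed as a special case of a partial one.
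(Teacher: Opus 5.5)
Your proposal is correct and matches the paper's reasoning. The paper treats this as a direct consequence of the definitions, equivalently Observation~\ref{Obs2} with $S=V(G)$: a $\psi_{q}$-coloring, viewed as a total (hence partial) onto function, is already a sub-quorum coloring with $\psi_{q}(G)$ colors.
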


Moreover, since each colored vertex having no colored neighbors with respect to a sub-quorum coloring of a graph is a quorum vertex, then by coloring all the vertices of a maximum independent set $S$ of any graph $G$ with $|S|$ different colors, we obtain a sub-quorum coloring of $G$ of cardinality $\beta_{1}(G),$ which implies the following observation.

\begin{observation}\label{Obs5}\cite{HedQuor} For every graph $G,$ \[\displaystyle\psi_{sq}(G)\geq\beta_{1}(G).\]
\end{observation}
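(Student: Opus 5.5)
The plan is to exhibit an explicit sub-quorum coloring whose number of colors is $\beta_{1}(G)$, namely the one in which the colored vertices form an independent set and each of them receives its own color. Since $\psi_{sq}(G)$ is the maximum number of colors over all sub-quorum colorings of $G$, this one construction gives the inequality.

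First, fix a maximum independent set $S=\{s_{1},\ldots,s_{k}\}$ of $G$, so that $k=\beta_{1}(G)$. Define the partial function $f:V\rightarrow\{1,\ldots,k\}$ by $f(s_{i})=i$ for every $i$, and leave $f$ undefined on $V\setminus S$. By construction $f$ is onto $\{1,\ldots,k\}$.

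Second, check the quorum condition at each colored vertex $s_{i}$. Because $S$ is independent, $s_{i}$ has no neighbor in $S$. Hence the only vertex of $N_{G}[s_{i}]$ having an image by $f$ is $s_{i}$ itself, and it trivially has the same color as $s_{i}$. So the proportion of colored vertices of $N_{G}[s_{i}]$ sharing the color of $s_{i}$ is $1\geq\frac{1}{2}$.

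Equivalently, in the language of Observation \ref{Obs2}, the induced subgraph $G[S]$ is edgeless. The partition of $S$ into singletons is therefore a quorum coloring of $G[S]$, which gives $\psi_{q}(G[S])\geq|S|$. It follows that $f$ is a sub-quorum coloring with $k$ colors, and so $\psi_{sq}(G)\geq k=\beta_{1}(G)$.

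There is no real obstacle here. The only point needing care is that the quorum condition is counted over colored neighbors only, which is exactly what makes isolated colored vertices automatically quorum vertices.
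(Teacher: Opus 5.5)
Your proof is correct and takes the same approach as the paper. Both color the vertices of a maximum independent set with $\beta_{1}(G)$ distinct colors and observe that a colored vertex with no colored neighbor is automatically a quorum vertex.
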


More generally, if a colored vertex $v$ has at most one colored neighbor not having the color of $v,$ then $v$ is a quorum vertex. Thus, by coloring all the vertices of a $2$-independent set $S$ of a graph $G$ with $|S|$ different colors, we obtain a sub-quorum coloring of cardinality $\beta_{2}(G).$ This gives the following observation.

\begin{observation}\label{Obs6} For every graph $G,$ \[\displaystyle\psi_{sq}(G)\geq\beta_{2}(G).\]
\end{observation}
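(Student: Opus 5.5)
The plan is to produce an explicit sub-quorum coloring with $\beta_{2}(G)$ colors. By Observation~\ref{Obs2} this is the same as exhibiting an induced subgraph $G[S]$ with $\psi_{q}(G[S])\geq\beta_{2}(G)$, so I will take $S$ to be a maximum $2$-independent set of $G$, with $|S|=\beta_{2}(G)$. I then define $f:V\rightarrow\{1,2,\ldots,|S|\}$ by giving the vertices of $S$ pairwise distinct colors and leaving every vertex of $V\setminus S$ uncolored. Since the colors are distinct, $f$ is onto $\{1,\ldots,|S|\}$. Everything then reduces to checking the sub-quorum condition at each colored vertex.

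Take $v\in S$. The colored vertices of $N_{G}[v]$ are exactly those of $N_{S}[v]=N_{G}[v]\cap S$. Because $S$ is $2$-independent, $d_{S}(v)\leq1$, so $|N_{G}[v]\cap S|\in\{1,2\}$. Only $v$ itself carries its color among these, so the proportion of colored vertices of $N_{G}[v]$ sharing $v$'s color is $1$ when $d_{S}(v)=0$ and $\frac{1}{2}$ when $d_{S}(v)=1$. In both cases the proportion is at least one half, so every colored vertex satisfies the quorum requirement. Hence $f$ is a sub-quorum coloring with $\ell=\beta_{2}(G)$, and by the definition of $\psi_{sq}(G)$ as the maximum such $\ell$ we get $\psi_{sq}(G)\geq\beta_{2}(G)$.

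The argument is a direct construction with no real obstacle. The only point that needs care is reading the definition correctly: the ratio counts only the colored vertices of $N_{G}[v]$, and $v$ itself is included in that count. This is why the case $d_{S}(v)=1$ gives exactly the boundary value $\frac{1}{2}$ and is still admissible.
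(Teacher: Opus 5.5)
Your proof is correct and follows the paper's own argument: you color a maximum $2$-independent set with pairwise distinct colors and leave the remaining vertices uncolored. As in the paper, each colored vertex then has at most one colored neighbor, so it has at least as many colored vertices of its own color (itself) in its closed neighborhood as of other colors, which meets the sub-quorum condition.
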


Let us now state some fundamental results on quorum colorings thanks to which we will deduce analogous basic properties for sub-quorum colorings. The first one says that the quorum coloring number is a linear function with respect to the disjoint union of graphs.

\begin{proposition}\label{Pro7}\cite{EroAll} Let $G$ be a disconnected graph whose components are $G_{1},G_{2},\ldots,G_{r}$ ($r\geq1$). Then \[\displaystyle\psi_{q}(G)=\sum_{1\leq i\leq r}\psi_{q}(G_{i}).\]\end{proposition}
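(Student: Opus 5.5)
The plan is to prove the two inequalities $\psi_{q}(G)\geq\sum_{i}\psi_{q}(G_{i})$ and $\psi_{q}(G)\leq\sum_{i}\psi_{q}(G_{i})$ separately. Both rest on one locality fact: for every vertex $v\in V(G_{i})$ we have $N_{G}[v]=N_{G_{i}}[v]$, since $v$ has no neighbor outside its own component. Consequently, whether $v$ satisfies the quorum condition depends only on how the vertices of $V(G_{i})$ are colored. (We may treat the components as pairwise distinct vertex sets; $r=1$ is trivial.)

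For the lower bound, for each $i$ take a $\psi_{q}$-coloring $\pi_{i}$ of $G_{i}$. Relabel the colors so that the color sets used on different components are pairwise disjoint, and let $\pi$ be the union of all the classes of all the $\pi_{i}$. This is a partition of $V(G)$ into $\sum_{i}\psi_{q}(G_{i})$ classes. For $v\in V(G_{i})$, the class $\mathcal{C}_{\pi}(v)$ is exactly $\mathcal{C}_{\pi_{i}}(v)$. Hence
\[|N_{G}[v]\cap\mathcal{C}_{\pi}(v)|=|N_{G_{i}}[v]\cap\mathcal{C}_{\pi_{i}}(v)|\geq\tfrac12|N_{G_{i}}[v]|=\tfrac12|N_{G}[v]|,\]
so $\pi$ is a quorum coloring of $G$.

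For the upper bound, let $\pi=\{V_{1},\ldots,V_{k}\}$ be a $\psi_{q}$-coloring of $G$. For each $i$, consider the restriction
\[\pi|_{G_{i}}=\{V_{j}\cap V(G_{i}):\ V_{j}\cap V(G_{i})\neq\emptyset\}.\]
This is a partition of $V(G_{i})$. For $v\in V(G_{i})$ with $v\in V_{j}$, we have $N_{G}[v]\subseteq V(G_{i})$, so $N_{G}[v]\cap V_{j}=N_{G_{i}}[v]\cap(V_{j}\cap V(G_{i}))$. Therefore the quorum inequality for $v$ in $G$ transfers verbatim to $G_{i}$, and $\pi|_{G_{i}}$ is a quorum coloring of $G_{i}$. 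It follows that $|\pi|_{G_{i}}|\leq\psi_{q}(G_{i})$.

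The main point needing care is the counting step. A class $V_{j}$ may meet several components, so in general $k\leq\sum_{i}|\pi|_{G_{i}}|$ rather than equality. Each nonempty $V_{j}$ contributes at least one nonempty piece $V_{j}\cap V(G_{i})$ for some $i$. This gives
\[\psi_{q}(G)=k\leq\sum_{i}|\pi|_{G_{i}}|\leq\sum_{i}\psi_{q}(G_{i}),\]
and combining with the lower bound yields equality. Equivalently, one could first split every class meeting several components into its component pieces; this does not decrease the number of classes and preserves the quorum property by the locality fact.
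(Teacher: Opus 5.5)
Your proof is correct and complete. The paper gives no proof of this statement (it quotes it from Eroh and Gera \cite{EroAll}), so there is no in-paper argument to compare against. Your route is the standard one:
\begin{itemize}
\item the locality identity $N_{G}[v]=N_{G_{i}}[v]$;
\item gluing componentwise $\psi_{q}$-colorings with disjoint color sets for the lower bound;
\item restricting a quorum coloring of $G$ to each component for the upper bound.
\end{itemize}

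Your counting step $k\leq\sum_{i}|\pi|_{G_{i}}|$ correctly handles classes that meet several components. An alternative is to invoke Proposition~\ref{Pro8}: every class of a $\psi_{q}$-coloring induces a connected subgraph, so it lies in a single component, which gives $k=\sum_{i}|\pi|_{G_{i}}|$ directly. Your version avoids that dependence and in fact bounds the size of every quorum coloring of $G$, not only the maximum ones.
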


The second result establishes that every quorum class of a quorum coloring is connected.

\begin{proposition}\label{Pro8}\cite{HedQuor} Let $G$ be a graph, and let $\pi=\{V_{1},V_{2},\ldots,V_{k}\}$ be any $\psi_{q}$-coloring of $G.$ Then, for every $i,$ $1\leq i\leq k,$ the induced subgraph $G[V_{i}]$ is connected.\end{proposition}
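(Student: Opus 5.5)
We argue by contradiction, using the maximality of $k=\psi_{q}(G)$: if some class were disconnected, we would split it and obtain a quorum coloring with more classes. Let $\pi=\{V_{1},\ldots,V_{k}\}$ be a $\psi_{q}$-coloring of $G$, and suppose that for some $i$ the induced subgraph $G[V_{i}]$ is disconnected. Let $A$ be the vertex set of one component of $G[V_{i}]$ and put $B=V_{i}\setminus A$; both sets are nonempty. Consider the partition
\[\pi'=\bigl(\pi\setminus\{V_{i}\}\bigr)\cup\{A,B\},\]
which has $k+1$ classes.

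We now check that $\pi'$ is a quorum coloring. Vertices outside $V_{i}$ keep their classes, so their condition is unchanged. Take $v\in A$. Since $A$ is a component of $G[V_{i}]$, no neighbor of $v$ lies in $B$, and hence
\[N_{G}[v]\cap A=N_{G}[v]\cap V_{i}.\]
Therefore $|N_{G}[v]\cap\mathcal{C}_{\pi'}(v)|=|N_{G}[v]\cap V_{i}|\geq\frac{1}{2}|N_{G}[v]|$, so $v$ is still a quorum vertex. For $v\in B$, the same reasoning applies: $v$ has no neighbor in $A$, so $N_{G}[v]\cap B=N_{G}[v]\cap V_{i}$, and the same count holds. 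Thus $\pi'$ is a quorum coloring with $k+1>\psi_{q}(G)$ classes, which contradicts the definition of $\psi_{q}(G)$. Hence each $G[V_{i}]$ is connected.

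The only point that needs care is the identity $N_{G}[v]\cap\mathcal{C}_{\pi'}(v)=N_{G}[v]\cap V_{i}$ for every $v\in V_{i}$. It holds because $B$ is a union of components of $G[V_{i}]$ distinct from $A$, so no edge of $G$ joins $A$ to $B$.
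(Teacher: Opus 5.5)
Your proof is correct and complete: splitting a disconnected class $V_{i}$ into a component $A$ and the remainder $B$ leaves $N_{G}[v]\cap\mathcal{C}_{\pi'}(v)=N_{G}[v]\cap V_{i}$ for every $v\in V_{i}$, so the refinement is a quorum coloring with $k+1$ classes, contradicting the maximality of $\psi_{q}(G)$. The paper gives no proof of its own here and only cites Hedetniemi et al.; your argument is the standard one that the citation relies on, so nothing is missing.
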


From Propositions \ref{Pro7} and \ref{Pro8} and Observation \ref{Obs2}, we deduce the following properties for the sub-quorum colorings.

\begin{proposition}\label{Pro9} Let $G$ be a disconnected graph whose components are $G_{1},G_{2},\ldots,G_{r}$ ($r\geq1$). Then \[\displaystyle\psi_{sq}(G)=\sum_{1\leq i\leq r}\psi_{sq}(G_{i}).\]\end{proposition}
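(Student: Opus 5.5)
We prove the equality by establishing the two inequalities separately. Both rest on Observation \ref{Obs2}, which expresses $\psi_{sq}$ as a maximum of quorum coloring numbers of induced subgraphs, and on the additivity of $\psi_{q}$ over components given by Proposition \ref{Pro7}.

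For the inequality $\psi_{sq}(G)\geq\sum_{i}\psi_{sq}(G_{i})$, fix for each $i$ a subset $S_{i}\subseteq V(G_{i})$ with $\psi_{q}(G_{i}[S_{i}])=\psi_{sq}(G_{i})$; such a subset exists by Observation \ref{Obs2}. Put $S=\bigcup_{i}S_{i}$. Since there are no edges between distinct components of $G$, the induced subgraph $G[S]$ is the disjoint union of the graphs $G_{i}[S_{i}]$. If some $S_{i}$ are empty they contribute nothing, and each nonempty $G_{i}[S_{i}]$ splits further into its own components. Applying Proposition \ref{Pro7} twice, first to $G[S]$ and then to each $G_{i}[S_{i}]$ (regrouping the components), gives
\[\psi_{q}(G[S])=\sum_{i}\psi_{q}(G_{i}[S_{i}])=\sum_{i}\psi_{sq}(G_{i}).\]
By Observation \ref{Obs2}, $\psi_{sq}(G)\geq\psi_{q}(G[S])$, which yields this inequality.

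For the reverse inequality, choose $S\subseteq V(G)$ with $\psi_{q}(G[S])=\psi_{sq}(G)$ and set $S_{i}=S\cap V(G_{i})$. Again $G[S]$ is the disjoint union of the $G_{i}[S_{i}]$, so grouping its components by the $G_{i}$ they lie in and using Proposition \ref{Pro7} gives
\[\psi_{q}(G[S])=\sum_{i}\psi_{q}(G_{i}[S_{i}])\leq\sum_{i}\psi_{sq}(G_{i}),\]
where the last step uses Observation \ref{Obs2} applied to each $G_{i}$.

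The only subtle point is bookkeeping: the subgraphs $G_{i}[S_{i}]$ need not be connected and may be empty. Proposition \ref{Pro7} has to be applied to the actual connected components of $G[S]$ and then regrouped by the component $G_{i}$ containing each of them, and an empty graph must be assigned quorum coloring number $0$. Proposition \ref{Pro8} is not needed.
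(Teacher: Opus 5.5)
Your proof is correct and follows the paper's route. The paper gives no written proof: it only says that Proposition~\ref{Pro9} is deduced from Proposition~\ref{Pro7} and Observation~\ref{Obs2}, and your two inequalities, including the regrouping of components and the convention that the empty graph has quorum coloring number $0$, are exactly the details of that deduction. You are also right that Proposition~\ref{Pro8} is not needed here; the paper cites it because it is the ingredient for Proposition~\ref{Pro10}.
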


\begin{proposition}\label{Pro10} Let $G$ be a graph, and let $\pi=\{V_{1},V_{2},\ldots,V_{k}\}$ be any $\psi_{sq}$-coloring of $G.$ Then, for every $i,$ $1\leq i\leq k,$ the induced subgraph $G[V_{i}]$ is connected.\end{proposition}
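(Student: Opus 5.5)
The plan is to reduce the statement to Proposition \ref{Pro8} by passing to the induced subgraph carrying the colored vertices. Let $\pi=\{V_{1},\ldots,V_{k}\}$ be a $\psi_{sq}$-coloring of $G$, associated with an onto partial function $f$. Put $S=f^{-1}(\{1,\ldots,k\})=V_{1}\cup\cdots\cup V_{k}$, so that $G_{f}=G[S]$. As noted after the definition of sub-quorum colorings, $\pi$ is a quorum coloring of $G[S]$. This gives $k\leq\psi_{q}(G[S])$.

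Next I will show that $\pi$ is in fact a $\psi_{q}$-coloring of $G[S]$. By Observation \ref{Obs2}, $\psi_{q}(G[S])\leq\max_{T\subseteq V(G)}\psi_{q}(G[T])=\psi_{sq}(G)=k$. Together with the previous inequality, $\psi_{q}(G[S])=k=|\pi|$.

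Proposition \ref{Pro8} then applies to the graph $G[S]$ and its $\psi_{q}$-coloring $\pi$: each $G[S][V_{i}]$ is connected. Since $V_{i}\subseteq S$, we have $G[S][V_{i}]=G[V_{i}]$, which gives the claim.

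No real obstacle is expected. The only point needing care is that Proposition \ref{Pro8} is stated for $\psi_{q}$-colorings, not arbitrary quorum colorings. So the key step is the maximality argument via Observation \ref{Obs2}, which upgrades $\pi$ from a quorum coloring of $G_{f}$ to a maximum one. If $G[S]$ happens to be disconnected, this causes no difficulty, since Proposition \ref{Pro8} is stated for an arbitrary graph.
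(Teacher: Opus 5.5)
Your proof is correct and matches the paper's route: the paper simply states that this proposition follows from Proposition \ref{Pro8} and Observation \ref{Obs2}. Your argument spells out the step the paper leaves implicit, namely that maximality of $\pi$ forces $\psi_{q}(G[S])=k$, so $\pi$ is a $\psi_{q}$-coloring of $G_{f}=G[S]$ and Proposition \ref{Pro8} applies.
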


The next result, due to Eroh and Gera \cite{EroAll} provides a sharp lower and upper bounds for the quorum coloring number of an arbitrary graph of order at least three, that are respectively attained by the complete graphs of odd order and the complements of the complete graphs.

\begin{proposition}\label{Pro11}\label{EroAll} \end{proposition}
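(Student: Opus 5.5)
Since the statement is attributed to Eroh and Gera and is described in the text as giving sharp lower and upper bounds for $\psi_{q}(G)$ when $G$ has order $n\geq3$, attained by the complete graphs of odd order and by the complements of the complete graphs, I read it as asserting
\[1\leq\psi_{q}(G)\leq n,\]
with equality on the left for $G=K_{n}$, $n$ odd, and on the right for $G=\overline{K_{n}}$. The plan has two parts: establish the two inequalities directly from the definition, then verify the two extremal families.

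\emph{The inequalities.} For the lower bound I will use the trivial partition $\pi=\{V\}$. For every $v\in V$, all of $N_{G}[v]$ has the colour of $v$, so $\pi$ is a quorum colouring and $\psi_{q}(G)\geq1$. For the upper bound, a partition of $V$ into nonempty classes has at most $|V|=n$ classes, so $\psi_{q}(G)\leq n$. Both are immediate.

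\emph{Sharpness of the upper bound.} For $\overline{K_{n}}$ I will take the partition into singletons. Every vertex $v$ is isolated, so $N[v]=\{v\}$ and the whole closed neighbourhood has the colour of $v$. Hence this is a quorum colouring with $n$ classes and $\psi_{q}(\overline{K_{n}})=n$.

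\emph{Sharpness of the lower bound.} This is the only step with real content. Let $n$ be odd and let $\pi$ be a quorum colouring of $K_{n}$. Take a class $V_{i}$ and a vertex $v\in V_{i}$. Since $N[v]=V$, the quorum condition gives $|V_{i}|\geq n/2$, and as $n$ is odd this means $|V_{i}|\geq(n+1)/2$. So every class contains more than half of the vertices. Two disjoint such classes would together contain more than $n$ vertices, which is impossible, so $\pi$ has exactly one class and $\psi_{q}(K_{n})=1$.

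The parity hypothesis is essential here. For even $n$, splitting $K_{n}$ into two halves gives a quorum colouring with two classes, so the bound $1$ is not attained. I expect no genuine obstacle beyond this careful use of parity, and beyond noting that the hypothesis $n\geq3$ only ensures that the extremal examples are nontrivial.
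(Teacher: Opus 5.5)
Your proof is correct. The paper gives no proof of its own: it cites Eroh and Gera and asserts sharpness via $K_{n}$ ($n$ odd) and $\overline{K_{n}}$, and your argument (the trivial one-class partition, counting nonempty classes, singletons in $\overline{K_{n}}$, and the majority argument in $K_{n}$) is the standard justification of exactly those claims. Your upper-bound argument also never uses $n\geq3$, which matters because the paper later applies $\psi_{q}(H)\leq|V(H)|$ in Theorem~\ref{The15} to induced subgraphs of arbitrary order.
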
 Let $G$ be a graph of order $n\geq3.$ Then \[1\leq\psi_{q}(G)\leq n.\]

Finally, Observations \ref{Obs1}, \ref{Obs4}, \ref{Obs5} and \ref{Obs6} imply the following proposition.

\begin{proposition}\label{Pro12} Let $G$ be a graph. Then, \[\psi_{sq}(G)\geq\max\left\{\psi_{q}(G),\beta_{2}(G)\right\}.\] \end{proposition}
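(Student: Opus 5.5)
The inequality of Proposition \ref{Pro12} follows by combining two separate lower bounds for $\psi_{sq}(G)$ and then taking their maximum. There is no new construction to make. We only need to check that each of the two quantities $\psi_{q}(G)$ and $\beta_{2}(G)$ is individually bounded above by $\psi_{sq}(G)$.

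\textbf{First bound.} Observation \ref{Obs4} gives $\psi_{sq}(G)\geq\psi_{q}(G)$ directly. If one wants to see why, a $\psi_{q}$-coloring of $G$ is the total function $f$ with $S=V(G)$. Equivalently, take $S=V(G)$ in Observation \ref{Obs2}.

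\textbf{Second bound.} Observation \ref{Obs6} gives $\psi_{sq}(G)\geq\beta_{2}(G)$. Its justification is the remark preceding it. Take a maximum $2$-independent set $S$ and give its vertices pairwise distinct colors, leaving every vertex outside $S$ uncolored. Each colored vertex $v$ then has at most one colored neighbor. So among the colored vertices of $N_{G}[v]$ there are at most two, and $v$ itself is one of them. Hence at least half of them carry the color of $v$, and $f$ is a sub-quorum coloring onto $\{1,\ldots,\beta_{2}(G)\}$.

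\textbf{Conclusion.} Since $\psi_{sq}(G)$ is at least each of $\psi_{q}(G)$ and $\beta_{2}(G)$, it is at least their maximum. Observations \ref{Obs1} and \ref{Obs5} are then redundant, because $\beta_{1}(G)\leq\beta_{2}(G)$ means the $\beta_{1}$ bound is absorbed into the $\beta_{2}$ bound. This explains why $\beta_{1}(G)$ does not appear in the maximum.

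\textbf{Degenerate cases.} The only point needing care is the empty graph or $\beta_{2}(G)=0$, where an onto map to $\{1,\ldots,\ell\}$ with $\ell=0$ must be interpreted as the empty function. Such cases are trivial, so I expect no real obstacle here.
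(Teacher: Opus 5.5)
Your proposal is correct and matches the paper's argument. The paper obtains the proposition by combining Observation~\ref{Obs4} ($\psi_{sq}\geq\psi_{q}$) with Observation~\ref{Obs6} ($\psi_{sq}\geq\beta_{2}$, justified by the same distinct-colors-on-a-$2$-independent-set construction you give); it also cites Observations~\ref{Obs1} and~\ref{Obs5}, which, as you point out, are absorbed by the $\beta_{2}$ bound.
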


\section{Sub-quorum colorings of some infinite classes of trees}

In this section, we prove theorems providing the exact value of the sub-quorum coloring number of some infinite families of trees. In fact, we first determine the sub-quorum coloring number of paths, stars and double stars then use these results to deduce that of $n$-tuple complete caterpillars and complete caterpillars with minimum stem degree $3.$ In what follows, we will denote a sub-quorum-coloring of a graph either by a function or by a set, whichever is more convenient.

\subsection{Paths}

The first class for which we determine the sub-quorum coloring number is the simple class of paths whose a $\psi_{sq}$-coloring is illustrated in Figure~\ref{1}. Formally, we have the following proposition.

\begin{proposition}\label{Pro13} For every integer $n\geq1,$
\[\psi_{sq}(P_{n})=\beta_{2}(P_{n})=\left\lceil\dfrac{2n}{3}\right\rceil\]\end{proposition}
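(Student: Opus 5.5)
We prove the three-way equality by establishing three facts: $\beta_{2}(P_{n})\geq\lceil 2n/3\rceil$, $\psi_{sq}(P_{n})\leq\lceil 2n/3\rceil$, and (from Proposition~\ref{Pro12}) $\psi_{sq}(P_{n})\geq\beta_{2}(P_{n})$. Together these force equality throughout.

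\textbf{Lower bound.} Label the vertices of $P_{n}$ as $v_{1},\ldots,v_{n}$. Let $S$ consist of all $v_{i}$ with $i\not\equiv 0 \pmod 3$, so that $S$ is made of consecutive pairs $v_{3j+1}v_{3j+2}$, with possibly a single vertex $v_{n}$ left at the end. Every vertex of $S$ has at most one neighbour in $S$, so $S$ is $2$-independent. Its size is $n-\lfloor n/3\rfloor=\lceil 2n/3\rceil$. Hence $\beta_{2}(P_{n})\geq\lceil 2n/3\rceil$, and so $\psi_{sq}(P_{n})\geq\lceil 2n/3\rceil$.

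\textbf{Upper bound.} Take a $\psi_{sq}$-coloring $f$ with $\ell$ colors. By Observation~\ref{Obs2}, $f$ is a quorum coloring of the induced subgraph $G_{f}=P_{n}[S]$, where $S=f^{-1}(\{1,\ldots,\ell\})$. This subgraph is a disjoint union of subpaths $P_{n_{1}},\ldots,P_{n_{r}}$, separated by uncolored gaps of length at least one.

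First we bound the quorum coloring number of a path. In a path $P_{m}$, an interior vertex has $|N[v]|=3$, so at least one of its two neighbours shares its color. Therefore every color class containing an interior vertex has at least two vertices. A singleton class must be an endpoint, and a singleton endpoint is allowed only when $m=2$ (then $|N[v]|=2$) or $m=1$. Consequently $\psi_{q}(P_{1})=1$, $\psi_{q}(P_{2})=2$, and for $m\geq3$ every class has size at least $2$, so $\psi_{q}(P_{m})\leq\lfloor m/2\rfloor$. In every case $\psi_{q}(P_{m})\leq 2\lceil m/2\rceil/\ldots$; more usefully, $\psi_{q}(P_{m})\leq m$ with equality only when $m\leq 2$, and $\psi_{q}(P_{m})\leq m/2$ when $m\geq3$.

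By Proposition~\ref{Pro7}, $\ell=\sum_{i}\psi_{q}(P_{n_{i}})$.

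\textbf{Counting step.} Attach to each component the uncolored vertex immediately following it, except for the last component. Each component then occupies $n_{i}+1$ vertices of $P_{n}$, except the last, which occupies only $n_{i}$. The contributions compare as follows:
\begin{itemize}
\item a component with $n_{i}\leq2$ contributes $n_{i}\leq\tfrac{2}{3}(n_{i}+1)$;
\item a component with $n_{i}\geq3$ contributes at most $n_{i}/2\leq\tfrac{2}{3}(n_{i}+1)$.
\end{itemize}
Summing gives $\ell\leq\tfrac{2}{3}(n+1)$.

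The main obstacle is tightening this to $\lceil 2n/3\rceil$. The cleaner route is to argue that each block of three consecutive vertices $v_{3j+1},v_{3j+2},v_{3j+3}$ can contribute at most $2$ colors. This is immediate when $v_{3j+3}$ is uncolored. Otherwise a short case check is needed, using the fact that interior colored vertices need a same-colored neighbour. The final partial block, of length $1$ or $2$, contributes at most its own length. This yields $\ell\leq 2\lfloor n/3\rfloor+(n\bmod 3)=\lceil 2n/3\rceil$.

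Alternatively, one can do an induction on $n$: remove the first component together with its trailing gap vertex and apply the induction hypothesis to the remaining path. Using $\lceil 2(n-k)/3\rceil+\tfrac{2k}{3}$ with $k=n_{1}+1$, this works up to rounding. The rounding is handled by noting that when $n_{1}\leq2$ we have $k\leq3$, and checking $k=2$ and $k=3$ directly. The cases $n\leq3$ serve as base cases.
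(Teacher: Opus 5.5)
Your lower bound, via the $2$-independent set $\{v_i : i\not\equiv 0 \pmod 3\}$, is exactly the paper's. Your ``cleaner route'' for the upper bound is also exactly the paper's argument. Any three consecutive vertices carry at most two colors, so splitting $P_n$ into $\lfloor n/3\rfloor$ consecutive triples plus a remainder of $n \bmod 3$ vertices gives $\ell\le 2\lfloor n/3\rfloor+(n\bmod 3)=\lceil 2n/3\rceil$. The ``short case check'' is a single line. Three distinct colors on $v_i,v_{i+1},v_{i+2}$ force all three vertices to be colored, and then only $1$ of the $3$ colored vertices of $N[v_{i+1}]$ has the color of $v_{i+1}$. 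With that line written out, your proof is complete and coincides with the paper's.

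However, the auxiliary lemma in your component analysis is false and should be removed or repaired. For every $m\ge 2$, an endpoint $v$ of $P_m$ has $|N[v]|=2$, so $v$ is a quorum vertex whatever color its neighbour has. Singleton endpoint classes are therefore allowed for all $m$, not only for $m\le 2$. For example, $\{v_1\},\{v_2,v_3\},\{v_4\}$ is a quorum coloring of $P_4$ with $3>\lfloor 4/2\rfloor$ classes. In fact $\psi_q(P_m)=\lfloor m/2\rfloor+1$ for all $m\ge 1$: at most two singleton classes (the endpoints), and every other class has at least two vertices.

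Relatedly, the phrase ``interior colored vertices need a same-colored neighbour'' is only true for colored vertices both of whose neighbours are colored. That is the sense the block argument actually uses.

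The component route still works once the lemma is fixed, and it needs no further tightening. The corrected value satisfies $\lfloor m/2\rfloor+1\le\frac{2}{3}(m+1)$ for every $m\ge 1$ (check $m=1$ directly; for $m\ge 2$ use $m/2+1\le\frac{2}{3}(m+1)$). So your inequality $\ell\le\frac{2}{3}(n+1)$ survives. Since $\ell$ is an integer, it already gives $\ell\le\left\lfloor\frac{2n+2}{3}\right\rfloor=\left\lceil\frac{2n}{3}\right\rceil$. The ``main obstacle'' you identified therefore does not exist, and the corrected component argument is a valid alternative to the block argument. The induction sketch is unnecessary.
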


\begin{proof} For $n\in \{1,2\},$ it is obvious that $\psi_{sq}(P_{n})=n,$
and so that the result holds since one can easily check in this case that $n=\left\lceil\dfrac{2n}{3}\right\rceil.$ Suppose now that $n\geq3$ and let $\pi$ be a sub-quorum coloring of $P_{n}=v_{1}v_{2}\ldots v_{n}.$ On the one hand, observe that for every integer $i\in\{1,\ldots,n-2\},$ the maximum number of colors that one can use to color three consecutive vertices $v_{i},$ $v_{i+1}$ and $v_{i+2}$ of $P_{n}$ does not exceed $2$ for otherwise, $v_{i+1}$ would not be a quorum vertex. Then, $\psi_{sq}(P_{n})\leq\left\lceil\dfrac{2n}{3}\right\rceil.$ On the other hand, by setting $\pi=\left\{\{v_{i}\}~|~1\leq i\leq n\text{~and~}i\not\equiv0[3]\right\},$ we obtain that $\pi$ is a sub-quorum coloring of $P_{n}$ since it is clearly a $2$-independent set of $P_{n}.$ Consequently, $\psi_{sq}(P_{n})\geq|\pi|=\left\lceil\dfrac{2n}{3}\right\rceil.$ As a result, $\psi_{sq}(P_{n})=\beta_{2}(P_{n})=\left\lceil\dfrac{2n}{3}\right\rceil.$ \end{proof}

\vspace{0.3cm}

\begin{figure}[htbp]
\begin{center}
\begin{tikzpicture}[inner sep=1.15mm]
\tikzstyle{a}=[circle,fill=green!100]
\tikzstyle{b}=[circle,fill=red!100]
\tikzstyle{d}=[circle,fill=blue!100]
\tikzstyle{p}=[circle,fill=yellow!100]
\tikzstyle{i}=[circle,fill=black!100]
\tikzstyle{w}=[circle,fill=black!20]
\tikzstyle{n}=[rectangle,fill=black!0]
\tikzstyle{j}=[shape=circle,draw]
\tikzstyle{e}=[-]
\node [a](v1)at (0,0){};
\node [d](v2)at (1,0){};
\node [j](v3)at (2,0){};\node [n](v7)at (2.5,-1){$ P_{6} $};
\node [b](v4)at (3,0){};
\node [p](v5)at (4,0){};
\node [j](v6)at (5,0){};
\draw[e](v1)--(v2);\draw[e](v2)--(v3);\draw[e](v3)--(v4);\draw[e](v4)--(v5);
\draw[e](v5)--(v6);
\end{tikzpicture}
\caption{A $\psi_{sq}$-coloring of a path $P_{6}$}
\label{1}
\end{center}
\end{figure}

\subsection{Stars}

A star with at least two pendant vertices has a $\psi_{sq}$-coloring whose each of its classes consists in a single leaf (Figure~\ref{2}), as shown through the following proposition.

\begin{proposition}\label{Pro14} For every integer $n\geq2,$ \[\psi_{sq}(S_{n})=\beta_{2}(S_{n})=n.\]\end{proposition}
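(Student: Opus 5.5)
We prove the two inequalities $\psi_{sq}(S_{n})\geq n$ and $\psi_{sq}(S_{n})\leq n$ separately, and show along the way that $\beta_{2}(S_{n})=n$. Let $c$ denote the center of $S_{n}$ and $u_{1},\ldots,u_{n}$ its leaves.

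\textbf{Lower bound.} The set $L=\{u_{1},\ldots,u_{n}\}$ is an independent set, hence $2$-independent, so $\beta_{2}(S_{n})\geq n$. By Proposition~\ref{Pro12} (equivalently Observation~\ref{Obs6}), $\psi_{sq}(S_{n})\geq\beta_{2}(S_{n})\geq n$. Concretely, coloring each leaf with its own color and leaving $c$ uncolored gives a valid sub-quorum coloring.

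\textbf{Upper bound on $\beta_{2}$.} Let $S$ be a $2$-independent set. If $c\notin S$, then $S\subseteq L$ and $|S|\leq n$. If $c\in S$, then $c$ has at most one neighbor in $S$, and every vertex of $S$ other than $c$ is a leaf adjacent to $c$, so $|S|\leq 2\leq n$ since $n\geq2$. Hence $\beta_{2}(S_{n})=n$.

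\textbf{Upper bound on $\psi_{sq}$.} This is the only step needing an argument. Let $f$ be a sub-quorum coloring of $S_{n}$ with $\ell$ colors. If $c$ is uncolored, at most the $n$ leaves are colored, so $\ell\leq n$. If $c$ is colored, say $f(c)=1$, let $A$ be the set of colored leaves and $B\subseteq A$ those colored $1$. The quorum condition at $c$ reads $|B|+1\geq\frac{1}{2}(|A|+1)$, that is, $|A\setminus B|\leq|B|+1$. The number of colors satisfies
\[
\ell\leq 1+|A\setminus B|.
\]
If $B=\emptyset$, then $|A\setminus B|\leq1$, so $\ell\leq2\leq n$. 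If $B\neq\emptyset$, then $\ell\leq 1+(n-|B|)\leq n$. Leaves colored with a color different from $1$ impose no further constraint; it suffices that their own closed neighborhood $\{u_i,c\}$ is half theirs, which holds automatically. Therefore $\psi_{sq}(S_{n})\leq n$, and combining the bounds gives $\psi_{sq}(S_{n})=\beta_{2}(S_{n})=n$.
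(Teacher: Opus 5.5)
Your proof is correct and takes essentially the paper's approach: the lower bound comes from the leaves forming a $2$-independent set (Observation~\ref{Obs6}), and the upper bound from the quorum condition at the center. The paper skips your case analysis by noting that $n+1$ colors on $n+1$ vertices would make every class a singleton, which fails at the center since $d_{S_{n}}(v)\geq2$; it also cites Observation~\ref{Obs5} where Observation~\ref{Obs6}, the one you use, is the correct reference.
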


\begin{proof} Let $S_{n}$ be a star of center $v$ and let $L$ be the set of its $n$ leaves, with $n\geq2.$ Since $d_{S_{n}}(v)\geq2,$ then we have necessarily $\psi_{sq}(S_{n})\leq n$ for otherwise, $v$ would not be a quorum vertex. Furthermore, $L$ is clearly a maximum $2$-independent set of $S_{n}$ of cardinality $n.$ Therefore, we obtain by Observation \ref{Obs5} that $\psi_{sq}(S_{n})\geq\beta_{2}(S_{n})=n.$ Hence the proposition.
\end{proof}

\vspace{0.3cm}

\begin{figure}[htbp]
\begin{center}
\begin{tikzpicture}[inner sep=1.15mm]
\tikzstyle{a}=[circle,fill=green!100]
\tikzstyle{b}=[circle,fill=red!100]
\tikzstyle{d}=[circle,fill=blue!100]
\tikzstyle{p}=[circle,fill=yellow!100]
\tikzstyle{i}=[circle,fill=black!100]
\tikzstyle{j}=[shape=circle,draw]
\tikzstyle{w}=[circle,fill=brown!100]
\tikzstyle{n}=[rectangle,fill=black!0]
\tikzstyle{e}=[-]
\node [j](v1)at (3,3){};
\node [a](v2)at (3,2){};
\node [w](v3)at (2,2){};
\node [b](v4)at (4,2){};
\node [d](v5)at (1,2){};
\node [p](v6)at (0,2){};
\draw[e](v1)--(v2);\draw[e](v1)--(v3);\draw[e](v1)--(v4);\draw[e](v1)--(v5);\draw[e](v1)--(v6);
\end{tikzpicture}
\caption{A $\psi_{sq}$-coloring of the star $S_{5}$}
\label{2}
\end{center}
\end{figure}

\subsection{Double stars}

The sub-quorum coloring of the double stars is given by the following theorem.

\begin{theorem}\label{The15} For every integers $m,n\geq1,$ \[\psi_{sq}(S_{m,n})=\left\{\begin{array}{cc}
m+n+1, & \text{~if~} \min{\{m,n\}}=1 \text{~or~} \max{\{m,n\}}\leq2, \\
\beta_{2}(S_{m,n})=m+n & \text{~if~} \min{\{m,n\}}\geq2 \text{~and~} \max{\{m,n\}}\geq3.\end{array}\right.\]\end{theorem}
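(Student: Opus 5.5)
The plan is to prove matching lower and upper bounds in each of the two cases. Write $u$ for the center of $S_{m}$ with pendant neighbors $a_{1},\ldots,a_{m}$, and $v$ for the center of $S_{n}$ with pendant neighbors $b_{1},\ldots,b_{n}$. Then $S_{m,n}$ has $m+n+2$ vertices, with $d(u)=m+1$ and $d(v)=n+1$.

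\textbf{General upper bound.} I would first show $\psi_{sq}(S_{m,n})\leq m+n+1$ for all $m,n\geq1$. A sub-quorum coloring with $m+n+2$ colors colors every vertex with its own color. Then $u$ has $m+2\geq3$ colored vertices in its closed neighborhood, only one of which (itself) has its color, so $u$ is not a quorum vertex.

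\textbf{First case: $\min\{m,n\}=1$ or $\max\{m,n\}\leq2$.} Only a construction with $m+n+1$ colors is needed.
\begin{itemize}
\item If $\max\{m,n\}\leq2$, put $\{u,v\}$ in one class and make every leaf a singleton class. The leaves have a single colored neighbor, so they are quorum vertices. The vertex $u$ has $2$ vertices of its color among the $m+2\leq4$ colored vertices of $N[u]$, so it is a quorum vertex; the same holds for $v$.
\item If, say, $m=1$, leave $v$ uncolored and make $u$, $a_{1}$ and every $b_{i}$ singleton classes. The vertex $u$ sees only $u$ and $a_{1}$ as colored vertices of $N[u]$, so it is a quorum vertex; $a_{1}$ sees $u$ and itself; each $b_{i}$ has no colored neighbor.
\end{itemize}
Either way this gives $1+m+n$ colors.

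\textbf{Second case: $\min\{m,n\}\geq2$ and $\max\{m,n\}\geq3$.} Assume without loss of generality that $n\geq3$. The leaves form a $2$-independent (indeed independent) set of size $m+n$. I would note that $\beta_{2}(S_{m,n})=m+n$: a $2$-independent set containing both centers can contain no leaves, and one containing exactly one center can contain at most one of its leaves besides all leaves of the other center. Proposition~\ref{Pro12} then gives $\psi_{sq}\geq m+n$.

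For the upper bound, by the general bound it suffices to exclude a sub-quorum coloring with exactly $m+n+1$ colors. Counting vertices, such a coloring has only two possible shapes.
\begin{itemize}
\item \emph{All vertices colored, one class of size $2$, the rest singletons.} If the coloring is a $\psi_{sq}$-coloring, Proposition~\ref{Pro10} makes the size-$2$ class an edge; even without this, that class contains at most $2$ vertices of $N[v]$. Since $|N[v]|=n+2\geq5$, the vertex $v$ fails to be a quorum vertex.
\item \emph{Exactly one vertex uncolored, all others singletons.} If the uncolored vertex is $v$, then $N[u]$ contains at least $m+1\geq3$ colored vertices, only one of color $f(u)$. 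Otherwise $N[v]$ contains at least $n+1\geq4$ colored vertices with only $v$ of its color. In both situations quorum fails.
\end{itemize}
Hence $\psi_{sq}=m+n=\beta_{2}$.

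The main subtlety is in the second case. I must make sure that ruling out exactly $m+n+1$ colors, together with the general bound $m+n+1$, really caps $\psi_{sq}$ at $m+n$; this holds because the value $m+n+2$ is already excluded. I also need to handle the size-$2$ class without relying on connectivity, which the crude count above does.
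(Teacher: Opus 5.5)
Your proof is correct and follows essentially the same route as the paper. Both use the same two constructions in the first case, take the leaves as a maximum $2$-independent set for the lower bound in the second case, and split the upper bound according to whether zero, one, or at least two vertices are left uncolored. The only difference is framing. The paper bounds each $\psi_{q}(G[V\setminus S])$ via Observation~\ref{Obs2}, forcing the class of a maximum-degree vertex to contain at least $3$ (resp.\ $2$) vertices; you argue the contrapositive by classifying the possible shapes of an $(m+n+1)$-coloring, and your $\beta_{2}$ computation is more careful than the paper's.
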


\begin{proof} Set $G=S_{m,n},$ $V(G)=V,$ $L(G)=\{v_{1},v_{2},\ldots,v_{m+n}\}$ and $C=V\setminus L(G)=\{u_{0},v_{0}\}$ with $d_{G}(u_{0})\leq d_{G}(v_{0})=\Delta(G).$ Consider the two following cases.
\newline\newline{\bf Case 1.} $\min{\{m,n\}}=1 \text{~or~} \max{\{m,n\}}\leq2.$ Let $\pi$ be a quorum coloring of $G.$ Since $d_{G}(u_{0})\geq\min\{m,n\}+1\geq 2,$ then we have $|\mathcal{C}_{\pi}(u_{0})|\geq\left\lceil\frac{2+1}{2}\right\rceil=2.$ It follows that \[\psi_{q}(G)\leq1+|V\setminus\mathcal{C}_{\pi}(u_{0})|\leq1+m+n+2-2=m+n+1.~~~~~~~~(1)\] Furthermore, for every non empty subset $S\subseteq V,$ we have by Proposition \ref{Pro11} that \[\psi_{q}(G[V\setminus S])\leq|V\setminus S|\leq m+n+1.~~~~~~~~(2)\] Thus, we deduce from Inequalities $(1)$ and $(2)$ that \[\psi_{sq}(G)\leq m+n+1.\] Now, we are going to prove that $\psi_{sq}(G)\geq m+n+1$ by considering the two following subcases.
\newline{\bf Subcase 1.1.} $m=n=2.$ Let $f$ be the function defined on $V$ by
\[f(v)=\left\{\begin{array}{cc}
i, & \text{~if~} v=v_{i} \text{~for~} i\in\{1,2,\ldots,m+n\},\\
m+n+1 & \text{~if~}v\notin L(G).\end{array}\right.\]
One can see without difficulty that $f$ is a sub-quorum coloring of $G,$ which implies that $\psi_{sq}(G)\geq|sq(f)|=m+n+1.$
\newline{\bf Subcase 1.2.} $m=1$ or $n=1.$ In this case, we have $d_{G}(u_{0})=2.$ Let $g$ be the function defined on $V\setminus\{v_{0}\}$ by \[f(v)=\left\{\begin{array}{cc}
i, & \text{~if~}v=v_{i} \text{~for~} i\in\{1,2,\ldots,m+n\};\\
m+n+1 & \text{~if~}v=u_{0}.\end{array}\right.\]
Therefore, $g$ is a sub-quorum coloring of $G,$ inducing that $\psi_{sq}(G)\geq|sq(g)|=m+n+1.$
\newline\newline{\bf Case 2.} $\min{\{m,n\}}\geq2$ and $\max{\{m,n\}}\geq3.$ Thereby, we get $d_{G}(v_{0})\geq3.$ Moreover, $L$ is a necessarily a maximum $2$-independent set of $G$ and $\beta_{2}(G)=m+n$ for otherwise, if there exists a $2$-independent set $S$ of $G$ of cardinality at least $m+n+1,$ then one of $u_{0}$ or $v_{0}$ would have at least $3$ neighbors in $S,$ a contradiction. At present, let $h$ be the function defined on $L(G)$ by $h(v)=i,$ with $v=v_{i}$ for some $i\in\{1,2,\ldots,m+n\}.$ So, $h$ is a sub-quorum coloring of $G,$ which leads to $\psi_{sq}(G)\geq|sq(h)|=m+n.$ On the other hand, let $\pi_{1}$ be a quorum coloring of $G.$ Then $|\mathcal{C}_{\pi_{1}}(v_{0})|\geq\left\lceil\frac{4+1}{2}\right\rceil=3,$ which implies that $\psi_{q}(G)\leq1+|V\setminus\mathcal{C}_{\pi_{1}}(v_{0})|\leq1+m+n+2-3=m+n.$ Let $S$ be a nonempty subset of $V.$ Let us show that $\psi_{q}(G[V\setminus S])\leq m+n$ with respect to the following two subcases.
\newline{\bf Subcase 2.1.} $|S|=1.$ In this case, $\Delta(G[V\setminus S])\geq2.$ Let $w_{0}$ be a vertex of $V\setminus S$ such that $d_{V\setminus S}(w_{0})=\Delta(G[V\setminus S]).$ Therefore, by arbitrarily choosing a quorum coloring $\pi_{2}$ of $G[V\setminus S],$ we obtain that $|\mathcal{C}_{\pi_{2}}(w_{0})|\geq\left\lceil\frac{2+1}{2}\right\rceil=2,$ then that $\psi_{q}(G[V\setminus S])\leq1+|V\setminus\mathcal{C}_{\pi_{2}}(w_{0})|\leq1+m+n+1-2=m+n.$
\newline{\bf Subcase 2.2.} $|S|\geq2.$ Here, we have $\psi_{q}(G[V\setminus S])\leq|V\setminus S|\leq m+n.$ \\
Thus, for every nonempty subset $S\subseteq V,$ we get $\psi_{sq}(G[V\setminus S])\leq m+n.$

\vspace{0.3cm}\noindent The proof is complete. \end{proof}\\

The $\psi_{sq}$-colorings of the double stars are illustrated in Figure~\ref{3}.

\vspace{0.3cm}

\begin{figure}[htbp]
\begin{center}
\begin{tikzpicture}[inner sep=1.1mm]
\tikzstyle{a}=[circle,fill=black!60]
\tikzstyle{b}=[circle,fill=green!30]
\tikzstyle{c}=[circle,fill=green!90]
\tikzstyle{d}=[circle,fill=red!30]
\tikzstyle{f}=[circle,fill=orange!1000]
\tikzstyle{g}=[circle,fill=brown!100]
\tikzstyle{h}=[circle,fill=blue!100]
\tikzstyle{l}=[circle,fill=yellow!100]
\tikzstyle{m}=[circle,fill=green!100]
\tikzstyle{n}=[circle,fill=black!100]
\tikzstyle{o}=[circle,fill=brown!100]
\tikzstyle{p}=[circle,fill=brown!100]
\tikzstyle{k}=[circle,fill=green!100]
\tikzstyle{w}=[circle,fill=black!20]
\tikzstyle{r}=[circle,fill=pink!100]
\tikzstyle{s}=[circle,fill=pink!30]
\tikzstyle{j}=[shape=circle,draw]
\tikzstyle{e}=[-]
\tikzstyle{c}=[draw,dashed]
\tikzstyle{zz}=[rectangle,fill=black!0]
\node [j](v1)at (0,0){};
\node [j](v2)at (2,0){};
\node [g](v3)at (-1,0){};
\node [h](v4)at (3,0){};\node [zz](v15)at (1,-2){$S_{3,3}$};
\node [l](v5)at (-1,1){};
\node [m](v6)at (-1,-1){};
\node [n](v7)at (3,1){};
\node [f](v8)at (3,-1){};
\node [g](v9)at (6,0){};
\node [g](v10)at (8,0){};
\node [h](v11)at (5,1){};\node [zz](v16)at (7,-2){$S_{2,2}$};
\node [l](v12)at (5,-1){};
\node [m](v13)at (9,1){};
\node [n](v14)at (9,-1){};
\draw[e](v1)--(v2);\draw[e](v1)--(v3);
\draw[e](v2)--(v4);\draw[e](v2)--(v7);
\draw[e](v2)--(v8);\draw[e](v1)--(v5);\draw[e](v1)--(v6);
\draw[e](v9)--(v10);\draw[e](v9)--(v11);\draw[e](v9)--(v12);\draw[e](v10)--(v13);\draw[e](v10)--(v14);
\end{tikzpicture}
\caption{A $\psi_{sq}$-coloring of $S_{2,2}$ and $S_{3,3}$}
\label{3}
\end{center}
\end{figure}

\subsection{Complete caterpillars, complete $n$-tuple caterpillars}

The first result of this subsection concerns the $\psi_{sq}$-colorings of the simple caterpillars (Figure~\ref{4}).

\begin{theorem}\label{The16} For every integer $m\geq1,$ \[\psi_{sq}(P_{m}\circ K_{1})=\beta_{2}(P_{m}\circ K_{1})=m+\left\lceil\dfrac{m}{2}\right\rceil.\]\end{theorem}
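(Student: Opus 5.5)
The plan is to prove the lower bound with an explicit $2$-independent set and invoke Proposition~\ref{Pro12}, and to prove the upper bound by a counting argument built on the connectivity of colour classes (Proposition~\ref{Pro10}). Write the spine as $u_{1}u_{2}\ldots u_{m}$ and let $w_{i}$ be the leaf attached to $u_{i}$.

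\textbf{Lower bound.} Take
\[S=\{w_{1},\ldots,w_{m}\}\cup\{u_{i}:\ i \text{ odd}\}.\]
In $S$ each odd $u_{i}$ has exactly one neighbour, namely $w_{i}$, because its spine neighbours have even index. Each leaf has at most one neighbour in $S$. Hence $S$ is $2$-independent, so
\[\beta_{2}(P_{m}\circ K_{1})\geq m+\lceil m/2\rceil,\]
and Proposition~\ref{Pro12} gives $\psi_{sq}\geq\beta_{2}\geq m+\lceil m/2\rceil$. Since $\beta_{2}\leq\psi_{sq}$, this bound together with the upper bound below yields both equalities in the statement.

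\textbf{Upper bound.} Fix a $\psi_{sq}$-coloring $f$. By Proposition~\ref{Pro10} each colour class induces a connected subgraph. A class of size at least $2$ therefore contains a spine vertex, and its spine part is a spine interval. So each class is one of three kinds:
\begin{itemize}
\item a single leaf;
\item a class whose spine part is a single vertex $u_{i}$ (possibly together with $w_{i}$);
\item a class whose spine part has length at least $2$.
\end{itemize}
Call a class of the second kind with $f(u_{i})\neq f(w_{i})$ a singleton spine class. Split these by whether $w_{i}$ is coloured (type $a$) or not (type $b$). Let $a$ and $b$ also denote their numbers, and let $c$ count all remaining classes that meet the spine.

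A type $a$ class $\{u_{i}\}$ has the coloured neighbour $w_{i}$ of another colour. Any further coloured neighbour would make $u_{i}$ non-quorum, so $u_{i-1}$ and $u_{i+1}$ are uncoloured. A class of type $b$ makes its leaf $w_{i}$ uncoloured. Classes in $c$ that contain their leaf use up that leaf, which only helps. Counting leaf colours plus spine classes gives at most
\[(m-b)+a+b+c=m+a+c\]
colours.

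It remains to show $a+c\leq\lceil m/2\rceil$. This is a packing argument on the spine path. Each class in $c$ occupies a run of at least $2$ consecutive spine vertices, except a spine class $\{u_{i}\}$ with $f(w_{i})=f(u_{i})$. That exception still has at most one differently coloured coloured neighbour; I expect to handle it by noting it forces an adjacent uncoloured spine vertex, and thus pair it like type $a$. Each type $a$ vertex is isolated among coloured spine vertices. Ordering the occupied runs along the path, each item together with the gap or extra vertex that must follow it consumes at least $2$ spine positions, except possibly the last one. Hence $2(a+c)-1\leq m$.

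The main obstacle is making this packing rigorous, in particular the case analysis for spine classes of size one that include their own leaf. I will first try to show directly that such a vertex cannot have a coloured spine neighbour of another colour together with a second coloured neighbour, so that it behaves like type $a$ or like a run of length $2$.
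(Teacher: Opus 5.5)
\textbf{There is a genuine gap in the upper bound.} Your lower bound is fine. The step you flag as the main obstacle does not work as planned, and the inequality $a+c\le\lceil m/2\rceil$ cannot come from the kind of local lemma you propose.

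Your claim about a class $\{u_i,w_i\}$ is false. Since $u_i$ and $w_i$ share a colour, $u_i$ has at least $2$ of the at most $4$ coloured vertices of $N[u_i]$ in its own colour. So $u_i$ is a quorum vertex even when both $u_{i-1}$ and $u_{i+1}$ carry other colours. Such a class therefore neither forces an uncoloured spine neighbour nor occupies two spine positions; it behaves neither like type $a$ nor like a run of length $2$.

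Two examples make this concrete. First, $\{u_1,u_2\},\{u_3,w_3\},\{u_4,u_5\}$ together with the singletons $\{w_1\},\{w_2\},\{w_4\},\{w_5\}$ is a sub-quorum colouring of $P_5\circ K_1$, and $\{u_3,w_3\}$ has both spine neighbours coloured. Second, giving each $\{u_i,w_i\}$ its own colour is a quorum colouring of $P_m\circ K_1$ with $a=0$ and $c=m>\lceil m/2\rceil$ for $m\geq 2$. The lemma you intend to prove uses only the quorum condition, not the maximality of $f$, so it cannot yield $a+c\le\lceil m/2\rceil$.

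\textbf{The fix.} The slack you need was discarded when you said that leaves absorbed into $c$-classes ``only help''. Keep them instead. Let $d$ be the number of classes of the form $\{u_i,w_i\}$. Then the number of single-leaf classes is at most $m-b-d$, so the total is at most $m+a+(c-d)$, and $c-d$ counts only classes whose spine part has length at least $2$.

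Your packing argument now applies verbatim. Type-$a$ vertices have both spine neighbours uncoloured, runs have length at least $2$, and type-$b$ or $\{u_i,w_i\}$ classes merely occupy positions. This gives $2(a+c-d)-1\le m$.

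Equivalently, you can decolour $u_i$ in every class $\{u_i,w_i\}$. This keeps the colouring sub-quorum, since the spine neighbours of $u_i$ only lose a differently coloured neighbour and $w_i$ becomes isolated. It also leaves the number of colours unchanged, so you may assume no such classes exist.

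With this repair, your global counting is a valid alternative to the paper's proof. The paper argues by induction: it shows one may assume the penultimate spine vertex is uncoloured. It then applies additivity over the components $T_{m-2}$, $K_1$ and $T_1$, together with the induction hypothesis.
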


\begin{proof} Let $v_{1}v_{2}\ldots v_{m}$ be the spine of $P_{m}\circ K_{1}.$ Set $T_{m}=P_{m}\circ K_{1}$ and $V=V(T_{m}).$ To prove that $\psi_{sq}(T_{m})=m+\left\lceil\dfrac{m}{2}\right\rceil,$ we apply the induction on the integer $m.$ For $m\in\{1,2\},$ we have according to Proposition \ref{Pro13} that $\psi_{sq}(T_{1})=\psi_{sq}(P_{2})=2=1+\left\lceil\dfrac{1}{2}\right\rceil$ and $\psi_{sq}(T_{2})=\psi_{sq}(P_{4})=3=2+\left\lceil\dfrac{2}{2}\right\rceil.$ Suppose the formula true for every integer $l\in\{1,\ldots,m-1\},$ with $m\geq3$ and let $\pi$ be a $\psi_{sq}$-coloring of $T_{m}.$ Set $U=\{v_{m-1},u_{m-1},v_{m},u_{m}\},$ where $u_{j}$ is a pendant neighbor of the stem $v_{j}$ for every $j\in\{m-1,m\}.$ On the one hand, observe that the number of $\pi$'s colors that can be used to color the vertices of $U$ is at most $3$ for otherwise, the stem $v_{m}$ would not be a quorum vertex. Thus, if the stem $v_{m-1}$ is colored $s$ with respect to $\pi,$ then by decoloring $v_{m-1}$ an all the vertices of $T_{m}$ colored $s$ and by assigning three new distinct colors to the vertices $v_{m},$ $u_{m-1}$ et $u_{m},$ we obtain a $\psi_{sq}$-coloring; we can therefore assume that $v_{m-1}$ is not colored with respect to $\pi.$ Since $T_{m}[V\setminus\{v_{m-1}\}]$ is disjoint union of $T_{m}[U\setminus\{v_{m-1}\}]\simeq K_{1}\cup T_{1}$ and $T_{m}[V\setminus U]\simeq T_{m-2},$ then we obtain according to Proposition \ref{Pro9} and the induction hypothesis that \[\psi_{sq}(T_{m})=\psi_{sq}(K_{1}\cup T_{1})+\psi_{sq}(T_{m-2})=m-2+ \left\lceil\dfrac{m-2}{2}\right\rceil+3\]
\hspace{5cm} $=m+1+\left\lceil\dfrac{m}{2}\right\rceil-1=m+\left\lceil\dfrac{m}{2}\right\rceil.$

Lastly, by observing that the set $L\cup\left\{v_{i}~|~1\leq i\leq m\text{ and }i\not\equiv 0[2]\right\}$ is a $2$-independent set of $T_{m}$ of cardinality $m+\left\lceil\dfrac{m}{2}\right\rceil,$ we conclude that $\psi_{sq}(T_{m})=\beta_{2}(T_{m}).$ \end{proof}

\vspace{0.3cm}

\begin{figure}[htbp]
\begin{center}
\begin{tikzpicture}[inner sep=1.1mm]
\tikzstyle{v}=[circle,fill=black!85]
\tikzstyle{a}=[circle,fill=red!100]
\tikzstyle{m}=[circle,fill=yellow!100]
\tikzstyle{s}=[circle,fill=green!100]
\tikzstyle{r}=[circle,fill=blue!100]
\tikzstyle{t}=[circle,fill=pink!100]
\tikzstyle{p}=[circle,fill=orange!100]
\tikzstyle{i}=[circle,fill=brown!100]
\tikzstyle{j}=[shape=circle,draw]
\tikzstyle{e}=[-]
\node [m](v1)at (0,0){};
\node [a](v2)at (0,1){};
\node [j](v4)at (3,0){};
\node [p](v5)at (6,0){};
\node [r](v55)at (6,1){};
\node [s](v44)at (3,1){};
 \draw[e](v1)--(v2);\draw[e](v1)--(v4);
\draw[e](v55)--(v5);\draw[e](v44)--(v4);\draw[e](v4)--(v5);
\end{tikzpicture}
\caption{A $\psi_{sq}-$coloring of the simple caterpillar $P_{3}\circ  K_{1}$}
\label{4}
\end{center}
\end{figure}

The next result provides the value of the sub-quorum coloring number of the double caterpillars (Figure~\ref{5}).

\begin{theorem}\label{The17} For every integer $m\geq1,$ \[\psi_{sq}(P_{m}\circ\overline{K_{2}})=2m+\left \lfloor\dfrac{1}{2}\left\lceil\dfrac{2m}{3}\right\rceil\right\rfloor.\]\end{theorem}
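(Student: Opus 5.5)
We prove the formula by giving a construction for the lower bound and an induction in steps of three spine-vertices for the upper bound, in the style of Theorem \ref{The16}. Set $T_{m}=P_{m}\circ\overline{K_{2}}$, let $v_{1}v_{2}\ldots v_{m}$ be its spine, and let $u_{i},u'_{i}$ be the two pendant neighbors of $v_{i}$. A useful first remark is that $\left\lfloor\frac{1}{2}\left\lceil\frac{2m}{3}\right\rceil\right\rfloor=\left\lfloor\frac{m+1}{3}\right\rfloor$. This can be checked modulo $3$.

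\textbf{Lower bound.} Give the $2m$ leaves $2m$ distinct colors. Then, for every $j\geq1$ with $3j\leq m+1$, give the two adjacent stems $v_{3j-2},v_{3j-1}$ one common new color, and leave all other stems uncolored. A colored stem $v$ then has exactly four colored vertices in $N[v]$: itself, its partner stem and its two leaves. Two of these four share its color, so $v$ is a quorum vertex. Each colored leaf has at most one colored neighbor, so it is a quorum vertex too. This sub-quorum coloring uses $2m+\left\lfloor\frac{m+1}{3}\right\rfloor$ colors.

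The construction also suggests why the bound should be tight. A single colored stem cannot carry a new color: with both leaves colored differently, it sees three colored vertices and only one of its own color. So extra colors come only from pairs of adjacent colored stems whose outer spine-neighbors are uncolored. The pattern $\times\times\circ$ along the spine maximizes the number of such pairs.

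\textbf{Upper bound.} We use induction on $m$ with step $3$. For the bases $m\in\{1,2,3\}$ the targets are $2$, $5$ and $7$.
\begin{itemize}
\item $T_{1}\simeq P_{3}$, so the value $2$ follows from Proposition \ref{Pro13}.
\item $T_{2}\simeq S_{2,2}$, so the value $5$ follows from Theorem \ref{The15}.
\item For $T_{3}$, whose target is $7$, we argue as for the double stars. Each stem of degree at least $3$ in the colored subgraph forces its class to have size at least $3$. We then exhaust the few subsets $S$ with $\psi_{q}(T_{3}[V\setminus S])\geq 8$ using Proposition \ref{Pro11}, in the manner of Subcases 2.1 and 2.2 of Theorem \ref{The15}.
\end{itemize}

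For the inductive step with $m\geq4$, take a $\psi_{sq}$-coloring $\pi$ of $T_{m}$ and focus on the stem $v_{m-2}$. Once $v_{m-2}$ is uncolored, Proposition \ref{Pro9} applies to the components of $T_{m}-v_{m-2}$. These are $T_{m-3}$, the two isolated leaves $u_{m-2},u'_{m-2}$, and $T_{2}\simeq S_{2,2}$. This gives
\[\psi_{sq}(T_{m})\leq\psi_{sq}(T_{m-3})+2+5.\]
The induction hypothesis then yields $2(m-3)+\left\lfloor\frac{m-2}{3}\right\rfloor+7=2m+\left\lfloor\frac{m+1}{3}\right\rfloor$, as required.

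\textbf{Main obstacle: uncoloring $v_{m-2}$.} The key step is to show that we may assume $v_{m-2}$ is uncolored without losing any color. The plan follows the decoloring trick of Theorem \ref{The16}.
\begin{itemize}
\item By Proposition \ref{Pro10}, the class $s$ of $v_{m-2}$ is connected.
\item We first show that the nine vertices of the last three stems and their leaves carry at most $7$ colors in total. The argument is that each colored stem whose colored degree is at least $2$ loses a color, in the same way $v_{m}$ did in Theorem \ref{The16}.
\item We then delete color $s$ entirely and recolor the last block optimally. That is, we give $u_{m-2},u'_{m-2}$ and the four leaves of $v_{m-1},v_{m}$ six new distinct colors, and give $v_{m-1},v_{m}$ one further common color.
\item The delicate point is the case where class $s$ extends left into $T_{m-3}$, say $v_{m-3}\in s$. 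In this case, removing $s$ could break the quorum condition of colored vertices of $T_{m-3}$ adjacent to $s$. We would handle it by showing that such a neighbor only gains from the removal. Uncoloring vertices of other classes never decreases the fraction of same-colored vertices in a closed neighborhood, and uncoloring $s$ entirely removes $s$-vertices only from the neighborhoods of vertices outside $s$.
\item We also need a careful count showing that the colors lost are at most those regained. This count is where we expect most of the case work.
\end{itemize}
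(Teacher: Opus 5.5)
Your proposal is correct and is essentially the paper's proof. Like the paper, you run a step-three induction: the bound of at most $7$ colors on $v_{m-2},v_{m-1},v_{m}$ and their leaves lets you decolor the class of $v_{m-2}$, recolor that block with $7$ fresh colors, and split $T_{m}-v_{m-2}$ into $T_{m-3}$, two isolated leaves and $S_{2,2}$. The differences are minor: you give an explicit lower-bound construction where the paper gets equality from the decomposition, you use $m=3$ as a base case where the paper starts from $T_{0}$, and the final color count you flag as delicate is immediate because, by connectivity of classes, $s$ is the only color of that block that can appear outside it.
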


\begin{proof} Let $v_{1}v_{2}\ldots v_{m}$ be the spine of $P_{m}\circ\overline{K_{2}}.$ Set $T_{m}=P_{m}\circ\overline{K_{2}},$ $V=V(T_{m})$ and $L=V\setminus\{v_{1},v_{2},\ldots,v_{m}\}.$ Let us use the induction on $m.$ For $m=0,$ we set $\psi_{sq}(T_{0})=0=0+\left \lfloor\dfrac{1}{2}\left\lceil\dfrac{0}{3}\right\rceil\right\rfloor.$ For $m=1,$ we have according to Proposition \ref{Pro14} that $\psi_{sq}(T_{1})=\psi_{sq}(S_{2})=2=2+\left\lfloor\dfrac{1}{2}\left\lceil\dfrac{2}{3}\right\rceil\right\rfloor.$ For $m=2,$ by Theorem \ref{The15} we have $\psi_{sq}(T_{2})=\psi_{sq}(S_{2,2})=2+2+1=5=4+\left\lfloor\dfrac{1}{2}\left\lceil\dfrac{4}{3}\right\rceil\right\rfloor.$ Suppose the formula true for every integer $l\in\{0,1,\ldots,m-1\},$ with $m\geq3$ and let $\pi$ be a $\psi_{sq}$-coloring of $T_{m}.$ Set $U=\{v_{m-2},v_{m-1},v_{m}\}\cup\left(N_{T_{m}}\left(\{v_{m-2},v_{m-1},v_{m}\}\right)\cap L\right).$ We will show that the maximum number of colors $c(U)$ of $\pi$ that we can use to color the vertices of $U$ is $7.$ First, remark that $c(U)<9$ for otherwise, no of the stems $v_{m-2},$ $v_{m-1}$ and $v_{m}$ would be a quorum vertex. Assume that $c(U)=8$ and that all the vertices of $U$ are colored. In this case, observe that if two adjacent stems $v_{i}$ and $v_{j}$ of $\{v_{m-2},v_{m-1},v_{m}\}$ are colored differently, then at least one of them would not be a quorum vertex. Consequently, the stems $v_{m-2},$ $v_{m-1}$ and $v_{m}$ have all the same color with respect to $\pi$ and we deduce that $c(U)\leq1+|U\setminus\{v_{m-2},v_{m-1},v_{m}\}|=7,$ a contradiction. Now, suppose that at least one stem $v_{k}$ is not colored for some $k\in\{m-2,m-1,m\}.$ If the vertices of $\{v_{m-2},v_{m-1},v_{m}\}\setminus\{v_{k}\}$ use two distinct colors of $\pi$, then one can see without difficulty that at least one vertex of $\{v_{m-2},v_{m-1},v_{m}\}\setminus\{v_{k}\}$ is not a quorum vertex, a contradiction. We conclude that we have necessarily $c(U)\leq7.$ Moreover, if $v_{m-2}$ is colored with a color $s$ with respect to $\pi,$ then we can decolor it together with all the vertices of $T_{m}$ colored $s,$ assign a new color to the stems $v_{m-1}$ and $v_{m}$ and assign six other distinct new colors to the leaves of $U$ so that we obtain a new $\psi_{sq}$-coloring of $T_{m}$ in which $v_{m-2}$ is not colored; so, we can assume that $v_{m-2}$ is not colored with respect to $\pi.$ Now, since $T_{m}[V\setminus\{v_{m-2}\}]$ is the disjoint union of $T_{m}[V\setminus U]\simeq T_{m-3}$ and $T_{m}[U\setminus\{v_{m-2}\}]\simeq\overline{K_{2}}\cup T_{2},$ then we obtain by Proposition \ref{Pro9} and the induction hypothesis that \[\psi_{sq}(T_{m})= \psi_{sq}(T_{m-3})+\psi_{sq}(\overline{K_{2}}\cup T_{2})=2(m-3)+\left\lfloor\dfrac{1}{2}\left\lceil\dfrac{2(m-3)}{3}\right\rceil
\right\rfloor+2+4+\left\lfloor\dfrac{1}{2}\left\lceil\dfrac{4}{3}\right\rceil
\right\rfloor\]
\[=2m+1+\left\lfloor\dfrac{1}{2}\left(\left\lceil\dfrac{2m}{3}\right\rceil-2\right)\right\rfloor
=2m+1+\left\lfloor\dfrac{1}{2}\left\lceil\dfrac{2m}{3}\right\rceil-1\right\rfloor\]
\hspace{3cm} $=2m+1+\left\lfloor\dfrac{1}{2}\left\lceil\dfrac{2m}{3}\right\rceil\right\rfloor-1
=2m+\left\lfloor\dfrac{1}{2}\left\lceil\dfrac{2m}{3}\right\rceil\right\rfloor.$ \end{proof}

\vspace{0.3cm}

\begin{figure}[htbp]
\begin{center}
\begin{tikzpicture}
\tikzstyle{a}=[circle,fill=black!40,inner xsep=1mm]
\tikzstyle{j}=[shape=circle,draw,inner xsep=1mm]
\tikzstyle{k}=[shape=circle,draw,dotted,thick,inner xsep=1mm]
\tikzstyle{e}=[-]
\tikzstyle{c}=[draw,dotted,thick]
\tikzstyle{d}=[rectangle,fill=black!0]
\node [a](v1)at (0,0){\tiny{$2k+1$}};
\node [a](v2)at (1,0){\tiny{$2k+1$}};
\node [j](v3)at (2,0){\hspace*{0.6cm}};\node [a](v31)at (3,0){\tiny{$2k+2$}};\node [a](v32)at (4,0){\tiny{$2k+2$}};
\node [j](v4)at (12,0){\hspace*{0.6cm}};\node [a](v103)at (3,-1.5){\tiny{$k+4$}};
\node [a](v5)at (13,0){\tiny{$2k+t$}};\node [a](v102)at (3,1.5){\tiny{$\hspace*{0.2cm}04\hspace*{0.2cm}$}};
\node [a](v6)at (14,0){\tiny{$2k+t$}};\node [a](v105)at (4,-1.5){\tiny{$k+5$}};
\node [a](v7)at (0,1.5){\tiny{$\hspace*{0.2cm}01\hspace*{0.2cm}$}};\node [a](v104)at (4,1.5){\tiny{$\hspace*{0.2cm}05\hspace*{0.2cm}$}};
\node [a](v8)at (0,-1.5){\tiny{$k+1$}};\node [j](v33)at (5,0){\hspace*{0.6cm}};
                              \node [a](v37)at (5,-1.5){\tiny{$k+6$}};
\node [a](v9)at (1,1.5){\tiny{$\hspace*{0.2cm}02\hspace*{0.2cm}$}}; \node [a](v344)at (5,1.5){\tiny{$\hspace*{0.2cm}06\hspace*{0.2cm}$}};
\node [a](v10)at (1,-1.5){\tiny{$k+2$}};
\node [a](v11)at (2,1.5){\tiny{$\hspace*{0.2cm}03\hspace*{0.2cm}$}};
\node [a](v12)at (2,-1.5){\tiny{$k+3$}};
\node [a](v13)at (12,1.5){\tiny{$k-2$}};
\node [a](v14)at (12,-1.5){\tiny{$2k-2$}};
\node [a](v15)at (13,1.5){\tiny{$k-1$}};
\node [a](v16)at (13,-1.5){\tiny{$2k-1$}};
\node [a](v17)at (14,1.5){\tiny{$\hspace*{0.3cm}k\hspace*{0.2cm}$}};
\node [a](v18)at (14,-1.5){\tiny{$\hspace*{0.2cm}2k\hspace*{0.2cm}$}};\draw[e](v32)--(v33);
\draw[e](v3)--(v31);\draw[e](v31)--(v32);\draw[e](v31)--(v103);
\draw[e](v31)--(v102);\draw[e](v105)--(v32);\draw[e](v104)--(v32);
\draw[e](v33)--(v37);\draw[e](v33)--(v344);
\draw[e](v1)--(v7);\draw[e](v1)--(v8);
\draw[e](v2)--(v9);\draw[e](v2)--(v10);
\draw[e](v3)--(v11);\draw[e](v3)--(v12);\draw[e](v13)--(v4);\draw[e](v14)--(v4);\draw[e](v5)--(v16);\draw[e](v5)--(v15);\draw[e](v17)--(v6);\draw[e](v6)--(v18);
\draw[e](v1)--(v2);\draw[e](v2)--(v3);
\draw[e](v4)--(v5);\draw[e](v5)--(v6);
\node [k](l1)at (7,1.5){\tiny{$i-1 $}};
\node [k](l2)at (8,1.5){\tiny{$\hspace*{0.3cm}i\hspace*{0.2cm}$}};
\node [k](l3)at (7,-1.5){\tiny{$\ell-1$}};
\node [k](l4)at (8,-1.5){\tiny{$\hspace*{0.3cm}\ell\hspace*{0.2cm}$}};
\node [k](l6)at (9,1.5){\tiny{$i+1$}};
\node [k](l5)at (9,-1.5){\tiny{$\ell+1$}};
\node [k](l7)at (7,0){\tiny{$\hspace*{0.3cm}j\hspace*{0.2cm}$}};
\node [k](l8)at (8,0){\tiny{$\hspace*{0.3cm}j\hspace*{0.2cm}$}};
\node [k](l9)at (9,0){\tiny{$\hspace*{0.6cm}$}};
\draw[c](l7)--(l1);\draw[c](l7)--(l3);\draw[c](l8)--(l2);\draw[c](l8)--(l4);\draw[c](l9)--(l5);\draw[c](l9)--(l6);\draw[c](l9)--(l8);\draw[c](l8)--(l7);\draw[c](v33)--(l7);\draw[c](l9)--(v4);
\node [d](p1)at (6,1.5){...};\node [d](p2)at (6,-1.5){...};\node [d](p3)at (10.5,1.5){...};
\node [d](p4)at (10.5,-1.5){...};
\end{tikzpicture}
\caption{A $\psi_{sq}$-coloring of the double caterpillars}
\label{5}
\end{center}
\end{figure}

The next theorem shows that for the complete caterpillars whose each stem has at least three pendant neighbors, the sub-quorum coloring number corresponds to the number of leaves of such a caterpillar.

\begin{theorem}\label{The18} Let $T_{m}$ be a complete caterpillar with $m$ stems whose each one has at least three pendant neighbors. Then, $$\psi_{sq}(T_{m})=\beta_{2}(T_{m})=|L(T_{m})|.$$
\end{theorem}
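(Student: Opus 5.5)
Let $v_{1}v_{2}\ldots v_{m}$ be the spine of $T_{m}$, let $L_{i}$ be the set of pendant neighbors of $v_{i}$ with $|L_{i}|=n_{i}\geq3$, and set $L=L(T_{m})=\bigcup_{i}L_{i}$. The lower bound is immediate. $L$ is an independent set, hence a $2$-independent set, so Proposition~\ref{Pro12} gives $\psi_{sq}(T_{m})\geq\beta_{2}(T_{m})\geq|L|$. It therefore suffices to show $\psi_{sq}(T_{m})\leq|L|$. Since $\beta_{2}(T_{m})\leq\psi_{sq}(T_{m})$ by Observation~\ref{Obs6}, this also yields $\beta_{2}(T_{m})=|L|$.

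For the upper bound I use a charging argument. Let $f$ be a sub-quorum coloring, let $S$ be its set of colored vertices, and let $\pi=sq(f)$. By Proposition~\ref{Pro10} I may take $f$ to be a $\psi_{sq}$-coloring whose classes induce connected subgraphs; alternatively I avoid this and argue directly. I assign each color class $V_{j}\in\pi$ to a leaf as follows.
\begin{itemize}
\item If $V_{j}$ contains a leaf, it is charged to one of its leaves.
\item Otherwise $V_{j}$ consists of stems only. Let $v_{i}\in V_{j}$. Since $v_{i}$ is a quorum vertex in $G_{f}$, at least half of $N_{S}[v_{i}]$ has color $j$.
\end{itemize}
Suppose $v_{i}$ has $t$ colored leaves, none of which has color $j$. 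Then $v_{i}$ has at most $2$ spine neighbors, so $|N_{S}[v_{i}]\cap V_{j}|\leq 3$. The quorum condition then forces $t\leq 2$ when both spine neighbors have color $j$, and $t$ smaller otherwise. Hence at least $n_{i}-t\geq 1$ leaves of $L_{i}$ are uncolored, and I charge $V_{j}$ to one uncolored leaf of $L_{i}$ for some $v_{i}\in V_{j}$.

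Distinct classes must receive distinct leaves. This is clear for leaves lying in the classes themselves. For uncolored leaves, a class consisting only of stems is a nonempty subset of the spine. I choose a specific stem, say the one of smallest index in $V_{j}$, and charge an uncolored leaf of that stem. Two different stem-only classes have different smallest-index stems, so their charged leaves lie in different sets $L_{i}$ and are distinct. An uncolored leaf is never charged to a class that contains a leaf. Hence the charging is injective, and $|\pi|\leq|L|$.

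The main obstacle is verifying the counting step. I must show that a stem $v_{i}$ lying in a class with no leaves necessarily leaves at least one of its pendant neighbors uncolored. The inequality is $t+(\text{differently colored spine neighbors})\leq 1+(\text{same-colored spine neighbors})$, which gives $t\leq 1+2=3$ only in a degenerate sense. More precisely, with $a$ neighbors of color $j$ and $b$ colored neighbors of other colors, the requirement is $a+1\geq b$. Here $a\leq2$ and $t\leq b$, so $t\leq3$. When $n_{i}=3$ this allows all three leaves to be colored, but only if both spine neighbors have color $j$ and are uncolored otherwise; these conditions conflict. So in the tight case I must examine the neighboring stems $v_{i\pm1}$ more carefully: they also have color $j$, so I charge via them instead, or I take $v_{i}$ to be an endpoint of the run of $V_{j}$ on the spine. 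An endpoint has $a\leq1$, so $t\leq2<3\leq n_{i}$, and this fixes the argument. I therefore choose the charged stem to be the smallest-index stem of $V_{j}$. By Proposition~\ref{Pro10} the class is a subpath, and its minimal stem has at most one same-colored spine neighbor, giving $t\leq2$ and an uncolored leaf as needed. The hypothesis that each stem has at least three pendant neighbors is used exactly here.
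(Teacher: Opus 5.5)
Your proof is correct, but it takes a different route from the paper.

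\textbf{The paper's route.} The paper inducts on $m$. At the end stem $v_m$, which has only one spine neighbor, the quorum condition shows that $\{v_m\}\cup L_m$ carries at most $|L_m|$ colors. It then normalizes an optimal coloring so that $v_m$ is uncolored: it decolors the whole class of $v_m$ and gives the leaves of $L_m$ fresh colors. It splits $T_m-v_m$ into $T_{m-1}$ and $\overline{K_{|L_m|}}$, and concludes with Proposition~\ref{Pro9}, the induction hypothesis, and the base cases from Proposition~\ref{Pro14} and Theorem~\ref{The15}.

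\textbf{Your route.} Your injective charging of color classes to leaves uses essentially the same local fact: a colored stem with at most one same-colored neighbor has at most two colored leaves of other colors. You apply it once per leafless class rather than recursively. As a result you need no recoloring normalization, no additivity over components, and no earlier results on stars or double stars, and you bound every sub-quorum coloring directly. Replace \emph{smallest-index stem} by \emph{a vertex of degree at most one in the forest $T[V_j]$}, and the same argument gives $\psi_{sq}=|L|$ for every tree whose internal vertices each have at least three pendant neighbors. The paper's induction instead follows the peel-off-the-end template it also uses for simple and double caterpillars, where the local count at the end differs.

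\textbf{Cleaning up the write-up.} Delete the exploratory middle part.
\begin{itemize}
\item The assertion that the quorum condition forces $t\leq 2$ when both spine neighbors have color $j$ is false. From $1+a\geq b$ with $a=2$ you only get $t\leq 3$.
\item There is no conflict when $n_i=3$ and $a=2$. A stem flanked by two stems of its own color, with its three leaves in three singleton classes, satisfies the quorum condition at $v_i$.
\end{itemize}
What makes the argument work is only your final choice. If $v_i$ is the smallest-index stem of a leafless class $V_j$, then $v_{i-1}\notin V_j$ by minimality. Hence $a\leq 1$ and $t\leq a+1\leq 2<3\leq n_i$. This uses minimality alone, so the appeal to Proposition~\ref{Pro10} is unnecessary.
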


\begin{proof} Let $v_{1}\ldots v_{m}$ be the spine of $T_{m}.$ We proceed again by induction. For $m\in\{1,2\},$ one can easily check the validity of the result by applying Proposition \ref{Pro14} and Theorem \ref{The15}. Suppose now that the formula is valid for every integer $l\in\{1,\ldots,m-1\},$ with $m\geq3.$ Set $V=V(T_{m}),$ $L_{m}=N_{T_{m}}(v_{m})\cap L(T_{m})$ and let $\pi$ be a $\psi_{sq}$-coloring of $T_{m}.$ Since $|L_{m}|\geq3,$ then we deduce that the maximum number of $\pi$'s colors used to color the vertices of $\{v_{m}\}\cup L_{m}$ does not exceed $|L_{m}|$ for otherwise, $v_{m}$ would not be a quorum vertex. In addition, if $v_{m}$ is colored with the color $s$ with respect to $\pi,$ then by decoloring it together with all the vertices having the color $s$ and by assigning $|L_{m}|$ new distinct colors to the leaves of $L_{m},$ then we obtain a $\psi_{sq}$-coloring of $T_{m}$ for which $v_{m}$ is not colored. So, we can assume that $v_{m}$ is not colored with respect to $\pi.$ Since $T_{m}[V\setminus\{v_{m}\}]\simeq\overline{K_{n}}$ is the disjoint union of $T_{m}[V\setminus\left(\{v_{m}\}\cup L_{m}\right)]\simeq T_{m-1}$ and $T_{m}[L_{m}]\simeq\overline{K_{|L_{m}|}},$ we obtain that \\

\hspace{4cm} $\displaystyle\psi_{sq}(T_{m})=\psi_{sq}(T_{m-1})+|L_{m}|=\sum_{i=1}^{m}|L_{i}|=|L(T_{m})|.$

Finally, since $L(T_{m})$ is a $2$-independent set of $T_{m},$ we conclude that $\psi_{sq}(T_{m})=\beta_{2}(T_{m}).$ \end{proof}

The following corollary gives the sub-quorum coloring number of the complete $n$-tuple caterpillars with $n\geq3$ as an immediate consequence of Theorem \ref{The18}.

\begin{corollary}\label{Cor19}
\label{Cor19} For every integers $m\geq1$ and $n\geq3,$ \[\psi_{sq}(P_{m}\circ\overline{K_{n}})=\beta_{2}(P_{m}\circ\overline{K_{n}})=mn.\]
\end{corollary}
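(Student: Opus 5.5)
The corollary should follow by specializing Theorem \ref{The18}. The corona $P_{m}\circ\overline{K_{n}}$ is a caterpillar whose spine is $v_{1}v_{2}\ldots v_{m}$ and in which every spine-vertex $v_{i}$ receives exactly $n$ pendant neighbors. In particular it is a complete caterpillar with $m$ stems. When $n\geq3$, each stem has at least three pendant neighbors, so the hypotheses of Theorem \ref{The18} hold. That theorem then gives directly
\[\psi_{sq}(P_{m}\circ\overline{K_{n}})=\beta_{2}(P_{m}\circ\overline{K_{n}})=|L(P_{m}\circ\overline{K_{n}})|.\]

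It remains to count the leaves. For $m\geq2$, every spine-vertex has degree at least $n+1\geq4$, so the leaves of $P_{m}\circ\overline{K_{n}}$ are exactly the $mn$ appended vertices. The case $m=1$ needs a separate check, since the graph is then the star $S_{n}$. Its center has degree $n\geq3$ and so is not a leaf, and there are $n=mn$ leaves. In every case $|L(P_{m}\circ\overline{K_{n}})|=mn$, and substituting this into the display above completes the argument.

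I expect no real obstacle. The only point needing care is that the degenerate case $m=1$ is still covered. Theorem \ref{The18} already handles this base case through Proposition \ref{Pro14}, and Proposition \ref{Pro14} gives $\psi_{sq}(S_{n})=\beta_{2}(S_{n})=n$ directly, which is consistent with the count $mn$.
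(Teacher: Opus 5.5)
Your proposal is correct and follows the paper's approach: the paper obtains the corollary as an immediate consequence of Theorem~\ref{The18}, applied to $P_{m}\circ\overline{K_{n}}$ with $n\geq3$. Your explicit count $|L(P_{m}\circ\overline{K_{n}})|=mn$, including the star case $m=1$, spells out the step the paper leaves implicit.
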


\section{Conclusion}

We have determined the sub-quorum coloring number of paths, stars and double stars which allowed us to obtain the value of the same parameter for the complete $n$-tuple caterpillars and the complete caterpillars of minimum stem degree $3,$ using a simple induction that relies on the sub-quorum-coloring number of double stars (Theorem \ref{The15}) as induction hypothesis. The interesting fact that emerges from our investigation comes from the inequality chain $\psi_{sq}(G)\geq\beta_{2}(G)\geq\beta_{1}(G)$ (Observations \ref{Obs1} and \ref{Obs2}) which could make it possible to obtain these last two independence parameters through the study of $\psi_{sq}$ as in Theorems \ref{The15},\ref{The16}, \ref{The18} and Corollary \ref{Cor19}. In particular, the characterization of caterpillars attaining the lower bounds $\psi_{q}$ or $\beta_{2}$ are suggested by our investigation. On the other hand, the determination of the sub-quorum coloring number for arbitrary caterpillars is a natural open problem followed from our work. Also, since the $n$-tuple complete caterpillars are corona graphs $P_{m}\circ\overline{K_{n}},$ then the study of the sub-quorum coloring number of the corona graphs $G\circ\overline{K_{n}}$ could be a first step towards a more general study of sub-quorum colorings.

\end{document}